\newtheorem{theorem}{Theorem}[section]
\newtheorem{lemma}{Lemma}[section]
\newtheorem{proposition}{Proposition}[section]
\newtheorem{remark}{Remark}[section]
\numberwithin{equation}{section}
\begin{document}
\title{\bf Fast and slow decaying solutions for $H^{1}$-supercritical quasilinear Schr\"{o}dinger equations\thanks{Supported by NSFC (No.11371146) and NSERC of Canada;
E-mail: chengyk@scut.edu.cn; jcwei@math.ubc.ca.}
\date{}
\author{
{\bf Yongkuan Cheng$^{1}$,\ \ Juncheng Wei$^{2}\thanks{Corresponding author.}$
}\\ {\small\it $^1$School of Mathematics, South China University of Technology,}
\\{\small\it Guangzhou, 510640, PR China}
\\ {\small\it $^2$Department of Mathematics, University of British Columbia,}
\\{\small\it Vancouver, B. C., V6T 1Z2, Canada}
}}
\maketitle
\begin{center}{\it   }\end{center}
\vskip0.36in

%\begin{center}
%\begin{minipage}{120mm}
{\bf Abstract }
 We consider the following quasilinear Schr\"{o}dinger equations of the form
 \begin{equation*}
\triangle u-\varepsilon V(x)u+u\triangle u^2+u^{p}=0,\ u>0\ \mbox{in}\ \mathbb{R}^N\ \mbox{and}\ \underset{|x|\rightarrow \infty}{\lim} u(x)=0,
\end{equation*}
where $N\geq 3,$ $p>\frac{N+2}{N-2},$ $\varepsilon>0$ and $V(x)$ is a positive function. By imposing appropriate conditions on $V(x),$ we prove that, for $\varepsilon=1,$ the existence of infinity many positive solutions with slow decaying $O(|x|^{-\frac{2}{p-1}})$ at infinity if $p>\frac{N+2}{N-2}$ and, for $\varepsilon$ sufficiently small, a positive solution with fast decaying $O(|x|^{2-N})$ if $\frac{N+2}{N-2}<p<\frac{3N+2}{N-2}.$ The proofs are based on perturbative approach. To this aim, we also analyze the structure of positive solutions for the zero mass problem.%\end{minipage}
%\end{center}

{\small{\bf Keywords}\quad
Nonlinear Schr\"{o}dinger equations;
$H^{1}$-supercritical; Fast and slow decaying solutions} \\

{\small{\bf  MSC}\quad 	35J20;
35J60; 35Q55} \vspace{0.5cm}

\vskip0.6in
\section{Introduction}

The nonlinear Schr\"{o}dinger equation
\begin{equation}\label{original-equation}
iz_t=-\triangle z+W(x)z-[\triangle |z|^2]z-|z|^{p-1}z,\ (t,x)\in(0,\infty)\times \mathbb{R}^{N},
\end{equation}
where $W:\mathbb{R}^{N}\rightarrow \mathbb{R}$ is a given potential,
has been introduced in \cite{Brizhik-2001,Brizhik-2003,Hart-2003} to study a model of a self-trapped electrons in quadratic or hexagonal lattices (see also \cite{Brihaye-2004}). In those references numerical and analytical results have been given.

Here of particular  interest is in the existence of standing wave solutions, that is, solutions of type $z(x,t)=\exp (-iE t)u(x),$ where $E\in\mathbb{R}.$ Assuming that the amplitude $u(x)$ is positive and vanishing at infinity, it is well known that $z$ satisfies (\ref{original-equation}) if and only if the function $u$ solves the following equation of quasilinear elliptic type
\begin{equation}\label{WZQ-equation}
\begin{cases}
\triangle u-\varepsilon V(x)u+u\triangle u^2+u^{p}=0,\ x\in\mathbb{R}^N;\\
 u>0\  \mbox{and}\ \underset{|x|\rightarrow \infty}{\lim} u(x)=0,
\end{cases}
\end{equation}
where $V(x)=W(x)-E$ is the new potential function. In the rest of this paper we will assume that $V(x)$ is a bound and positive function.

Because of the presence of the quasilinear term $u\triangle u^2,$ we can see that $p=\frac{3N+2}{N-2}$ is the critical exponent for the existence of solutions from the view of variational matheods.  For the subcritical case, that is, $1<p<\frac{3N+2}{N-2},$ construction of solutions to this problem by variational methods has been a hot topic during the last decade. A typical result for the equation (\ref{WZQ-equation}) is, up to our knowledge, due to Liu, Wang and Wang \cite{Liu-2003}. The idea in \cite{Liu-2003} is to make a change of variable and reduce the quasilinear problem (\ref{WZQ-equation}) to a semilinear one and the Orlicz space framework is used to prove the existence of positive solutions via the mountain pass theorem. Subsequently, the same method of changing of variable is also used in Colin and Jeanjean \cite{Colin-2004}, but the usual Sobolev space $H^{1}(\mathbb{R}^{N})$ is used as the working space. Recently, Shen and Wang in \cite{Shen-2013} study the following generalized quasilinear Schr\"{o}dinger equation:
\begin{equation}\label{Shen-equation}
-\mbox{div}(g^{2}(u)\nabla u)+g(u)g'(u)|\nabla u|^2+V(x)u=h(u),\ x\in\mathbb{R}^{N},
\end{equation}
where $g^{2}(s)=1+\frac{1}{2}(l(s^2)')^{2}.$ By introducing the variable replacement
\begin{equation}\label{Shen-replacement}
v=G(u)=\int_{0}^{u}g(t)\mbox{d}t,\ u=G^{-1}(v)
\end{equation}
and imposing some conditions on $V(x),$ the authors obtain the positive solution for (\ref{Shen-equation}) with a general function $l(s)$ when $h(s)$ is superlinear and subcritical. But under the condition
\begin{equation}\label{V}
\underset{|x|\rightarrow\infty}{\lim}|x|^{2}V(x)=0,
\end{equation}
the solvability of the equation (\ref{WZQ-equation}) with $1<p<\frac{3N+2}{N-2}$ still remains open.

%In particular, if $p<2\cdot 2^*-1,$ the existence result for equation (\ref{WZQ-equation}) is covered by \cite{Shen-2013}.

Subcriticality is a rather essential constraint in the use of many variational methods devised in the literature and many papers \cite{Poppenberg-Wang-2002, Wang-Proc-2002, Wang-JDE-2004, Moameni-JDE-2006, Moameni-Non-2006} focused on the subcritical case. Very little is known in the supercritical case since a major technical obstacle in understanding such problems stems from the lack of Sobolev embeddings suitably fit to a weak formulation of this problem. Direct tools of the calculus of variation, very useful in subcritical, and even critical cases, are not appropriate in the supercritical. In the critical case, Liu et al. in \cite{Liu-2003} asked the following open question: are there solutions for (\ref{WZQ-equation}) in the case of $p=\frac{3N+2}{N-2}?$
However, generally speaking, except some results relate to the critical exponent, see, for instance, \cite{Soares-JDE-2010, Wang-CVPDE-2013, Wang-JDE-2013, He-Non-2013, Ye-DCDS-2016, Silv-2010, Deng-2016, Shen-2016-cpaa}, there are still no conclusive results about the existence of positive solutions for the problem (\ref{WZQ-equation}) with $p=\frac{3N+2}{N-2}$ or $ p>\frac{3N+2}{N-2}$.

In all the papers mentioned above variational methods are used.  In this paper,
%for $\varepsilon$ sufficiently small and $\varepsilon=1,$
we shall explore the distinctive nature of this problem for having {\em two} critical exponents, one being $p=\frac{3N+2}{N-2}$ (from the quasilinear term $u \Delta u^2$) and the other being $p=\frac{N+2}{N-2}$ which is $H^1$-critical (from the term $\Delta u$). We shall concentrate in the problem (\ref{WZQ-equation}) when the exponent $p$ is $H^{1}$-supercritical, that is, $p>\frac{N+2}{N-2},$ (which includes $p=\frac{3N+2}{N-2}$), and we establish a new phenomenon from the viewpoint of {\em singular perturbations}.
Noticing that (\ref{WZQ-equation}) is a quasilinear problem, we adopt the change of variables which enable us to convert the original quasilinear problem (\ref{WZQ-equation}) into a semilinear problem
\begin{equation}\label{WZQ-equation-change}
\begin{cases}
\triangle v-\varepsilon V(x)\frac{G^{-1}(v)}{g(G^{-1}(v))}+f(v)=0,\ x\in\mathbb{R}^{N};\\
v>0\  \mbox{and}\ \underset{|x|\rightarrow \infty}{\lim} v(x)=0,
\end{cases}
\end{equation}
where $f(v)=\frac{G^{-1}(v)^{p}}{g(G^{-1}(v))}$ and $g(s)=\sqrt{1+2s^{2}}.$ Thus, if $v$ is a solution of (\ref{WZQ-equation-change}), we have $u=G^{-1}(v)$ is a solution of $(\ref{WZQ-equation}).$

A solution $v$ to (\ref{WZQ-equation-change}) is called fast decaying if $ v = O(|x|^{2-N})$ at infinity and slow decaying if $ v >> O(|x|^{2-N})$.
Then, to describe our result about the fast and slow decaying solutions,
our starting point is the zero mass problem
\begin{equation}\label{zero-mass}
\begin{cases}
\triangle u+u\triangle u^2+u^p=0,\ x\in\mathbb{R}^{N};\\
 u>0\  \mbox{and}\ \underset{|x|\rightarrow \infty}{\lim} u(x)=0.
\end{cases}
\end{equation}
Applying the change of variables (\ref{Shen-replacement}) again, the quasilinear problem (\ref{zero-mass}) can be reduced to the equations of the form
\begin{equation}\label{zero-mass-change}
\begin{cases}
\triangle v+f(v)=0,\ x\in\mathbb{R}^{N};\\
 v>0\  \mbox{and}\ \underset{|x|\rightarrow \infty}{\lim} v(x)=0.
\end{cases}
\end{equation}

Our first result concerns with the structure of positive radial solutions of the zero mass problem (\ref{zero-mass}).
\begin{theorem}\label{result-zero-mass}
Suppose that $p>1.$ Then
\begin{itemize}
\item[{\rm (1).}] there exist no fast decaying solutions to the problem (\ref{zero-mass}) if $p\geq \frac{3N+2}{N-2}$ or $1<p\leq\frac{N+2}{N-2};$
\item[{\rm (2).}] there exist a unique fast decaying radial solution to the problem (\ref{zero-mass}) if $\frac{N+2}{N-2}<p<\frac{3N+2}{N-2};$
\item[{\rm (3).}] there exist a one-parameter family of slow decaying radial solutions  to the problem (\ref{zero-mass}) if $p>\frac{N+2}{N-2}.$
\end{itemize}
\end{theorem}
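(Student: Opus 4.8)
The plan is to pass to the semilinear formulation and then split the proof into a soft Pohozaev argument for the non-existence in (1) and a shooting analysis for the existence statements (2) and (3). Applying the change of variables $v=G(u)$ turns \eqref{zero-mass} into $\Delta v+f(v)=0$ with $f(v)=G^{-1}(v)^{p}/g(G^{-1}(v))$, $g(s)=\sqrt{1+2s^{2}}$. Since $G^{-1}(v)\sim v$ and $g\to1$ as $v\to0^{+}$, while $G^{-1}(v)\sim 2^{1/4}v^{1/2}$ and $g(G^{-1}(v))\sim 2^{3/4}v^{1/2}$ as $v\to\infty$, one records the two effective powers
\[
f(v)\sim v^{p}\ (v\to0^{+}),\qquad f(v)\sim c_{\ast}\,v^{(p-1)/2}\ (v\to\infty).
\]
The arithmetic driving the whole theorem is that $p=\frac{N+2}{N-2}$ makes the small-$v$ power Sobolev critical, while $p=\frac{3N+2}{N-2}$ makes the large-$v$ power $\frac{p-1}{2}$ Sobolev critical (indeed $\frac{p-1}{2}=\frac{N+2}{N-2}\Leftrightarrow p=\frac{3N+2}{N-2}$). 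I would treat non-existence (1) for general solutions by a Pohozaev argument, and the existence statements (2), (3) by constructing radial solutions of the ODE $v''+\frac{N-1}{r}v'+f(v)=0$, $v(0)=\alpha$, $v'(0)=0$.

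For (1) I would use the quasilinear Pohozaev and Nehari identities for the energy $J(u)=\frac12\int(1+2u^{2})|\nabla u|^{2}-\frac{1}{p+1}\int u^{p+1}$, obtained by testing the equation with $x\cdot\nabla u$ and with $u$ over $B_{R}$ and letting $R\to\infty$. A fast decaying solution satisfies $u=O(|x|^{2-N})$, whence, by standard gradient estimates for the uniformly elliptic equation (the coefficient $1+2u^{2}\ge1$), $|\nabla u|=O(|x|^{1-N})$; this is exactly enough to kill all boundary terms for $N\ge3$, giving
\[
\tfrac{N-2}{2}\!\int(1+2u^{2})|\nabla u|^{2}=\tfrac{N}{p+1}\!\int u^{p+1},\qquad \int(1+4u^{2})|\nabla u|^{2}=\int u^{p+1}.
\]
Eliminating $\int u^{p+1}$ yields $\int\big[(1-A)+(4-2A)u^{2}\big]|\nabla u|^{2}=0$ with $A=\frac{(N-2)(p+1)}{2N}$. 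When $p\le\frac{N+2}{N-2}$ one has $A\le1$, so the bracket is $\ge(4-2A)u^{2}>0$; when $p\ge\frac{3N+2}{N-2}$ one has $A\ge2$, so the bracket is $\le1-A<0$. In either case the integrand is sign-definite and cannot integrate to zero, contradicting $u>0$, and (1) follows. Note the bracket changes sign precisely when $1<A<2$, i.e. $\frac{N+2}{N-2}<p<\frac{3N+2}{N-2}$, which is why (2) is possible only there.

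For (2) and (3) I would run the shooting method in $\alpha=v(0)$. The energy $E(r)=\frac12 v'(r)^{2}+F(v(r))$, $F(v)=\int_{0}^{v}f$, is nonincreasing, so a solution that stays positive for all $r$ must decay to $0$; hence each $v_{\alpha}$ either crosses zero at a finite radius or is a positive decaying solution, the latter being fast decaying ($v\sim c|x|^{2-N}$) or slow decaying ($v\sim L_{p}|x|^{-2/(p-1)}$, with $L_{p}$ the constant of the singular solution of the limiting tail equation $\Delta w+w^{p}=0$). To separate these I track the Pohozaev function $P(r)=r^{N}F(v)+\frac{r^{N}}{2}v'^{2}+\frac{N-2}{2}r^{N-1}vv'$, which obeys $P(0)=0$ and $P'(r)=r^{N-1}\big(NF(v)-\frac{N-2}{2}vf(v)\big)$; a computation with the two effective powers shows $NF-\frac{N-2}{2}vf$ is negative for small $v$ and, when $p<\frac{3N+2}{N-2}$, positive for large $v$, with a single sign change. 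Fast decay is characterized by $P(\infty)=0$, whereas slow decay forces $P(\infty)\neq0$. For small $\alpha$ the solution stays in the supercritical tail regime, so comparison with the scaled solutions of $\Delta w+w^{p}=0$ shows it is globally positive and slow decaying; letting $\alpha$ range over this interval produces the one-parameter family of (3), valid for all $p>\frac{N+2}{N-2}$. When $\frac{N+2}{N-2}<p<\frac{3N+2}{N-2}$ the large-$v$ power is subcritical, which I would use to show that for large $\alpha$ the solution crosses zero; continuity and openness of the crossing set then give a threshold $\alpha^{\ast}$ whose solution is globally positive with $P(\infty)=0$, i.e. the fast decaying solution of (2). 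When $p\ge\frac{3N+2}{N-2}$ the large-$v$ power is (super)critical and no solution crosses, so every $v_{\alpha}$ is slow decaying and no fast decaying solution exists, matching (1).

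The main obstacle is the fine part of the shooting analysis rather than the Pohozaev step. Three points need genuine ODE work: (i) proving the correct dichotomy in $\alpha$ even though $f$ interpolates between two different powers, so the naive rescaling toward a pure power is valid only in the core or in the tail separately and must be glued via the energy and comparison with the singular solution $L_{p}r^{-2/(p-1)}$; (ii) establishing that $NF(v)-\frac{N-2}{2}vf(v)$ changes sign exactly once, since only a single crossing lets $P(\infty)=0$ pin down a unique threshold; and (iii) the uniqueness assertion in (2), for which I would use a monotone-separation (Wronskian/Sturm) comparison between two putative fast decaying solutions together with strict monotonicity of the asymptotic constant along the orbit. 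These are the steps where the non-power, non-scale-invariant nature of $f$ makes the argument delicate.
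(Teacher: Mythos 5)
Your part (1) coincides with the paper's argument: combine the quasilinear Pohozaev identity $\frac{N-2}{2}\int(1+2u^{2})|\nabla u|^{2}=\frac{N}{p+1}\int u^{p+1}$ with the Nehari identity $\int(1+4u^{2})|\nabla u|^{2}=\int u^{p+1}$ and note that the resulting combination \eqref{P-and-solution} has both coefficients of one sign exactly when $p\leq\frac{N+2}{N-2}$ or $p\geq\frac{3N+2}{N-2}$; your normalization through $A=\frac{(N-2)(p+1)}{2N}$ is the same computation, and your justification of the integrability hypotheses from $u=O(|x|^{2-N})$ is, if anything, more explicit than the paper's. For (2) and (3), however, you take a genuinely different route. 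The paper proves existence in (2) variationally, by verifying the Berestycki--Lions conditions \cite{Berestycki-Lions} for $f(v)=G^{-1}(v)^{p}/g(G^{-1}(v))$ on the transformed problem \eqref{zero-mass-change} (the two effective powers $v^{p}$ and $v^{(p-1)/2}$ you identify are precisely what make (F-1) and (F-3) hold), with uniqueness and nondegeneracy imported from \cite{Adachi-2016}; and it proves (3) not by shooting but by rescaling $v\mapsto\lambda^{\frac{2}{p-1}}v(\lambda|x|)$, viewing the equation as a small perturbation of $\Delta v+v^{p}=0$, and running a contraction mapping in the weighted norms of \cite{Pino-Wei-2007} around the slow-decaying entire solution $w$. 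Your shooting scheme is essentially the Tang/Adachi--Watanabe program \cite{Tang-2001,Adachi-2016}, which gives a unified ODE picture and is known to deliver (2) together with the slow-decay family for $\frac{N+2}{N-2}<p<\frac{3N+2}{N-2}$; the paper's perturbative route instead yields (3) uniformly for all $p>\frac{N+2}{N-2}$, including $p\geq\frac{3N+2}{N-2}$, without classifying large-$\alpha$ orbits, and sets up the linear theory reused in Theorems \ref{result-2} and \ref{result-3}.

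Two steps in your sketch carry essentially all of the difficulty and are not discharged. First, the uniqueness in (2) and the single sign change of $NF(v)-\frac{N-2}{2}vf(v)$ are exactly the delicate, non-scale-invariant parts of Tang's analysis; invoking a ``Wronskian/Sturm comparison'' names the tool but does not supply the argument (the paper itself does not reprove this and relies on \cite{Adachi-2016}). Second, for $p\geq\frac{3N+2}{N-2}$ your claim that every non-crossing orbit is globally positive with the exact rate $|x|^{-2/(p-1)}$ rests on a comparison with $\Delta w+w^{p}=0$ that is not formulated: an ordering of nonlinearities such as $f(v)\leq v^{p}$ does not by itself order shooting orbits or their decay rates, whereas the paper's fixed-point construction produces the rate directly from the ansatz. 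Neither point shows your plan would fail, but these are the places where the real work lies and where your proposal is currently a program rather than a proof.
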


\begin{remark}\label{zero-mass-remark-1}
Some cases of the results of Theorem \ref{result-zero-mass} are contained in \cite{Adachi-2012,Adachi-2016}. More specifically, similarly to the standard Liouville theorem, if $1<p<\frac{N+2}{N-2},$ the authors proved the nonexistence results of fast decay solutions to (\ref{zero-mass}) (See \cite{Adachi-2012}). In \cite{Adachi-2016}, the authors showed the existence of a unique fast decay solution and a one-parameter family of slow decay solutions to (\ref{zero-mass}) if $\frac{N+2}{N-2}<p<\frac{3N+2}{N-2}$ via the results introduced in \cite{Tang-2001}.  Moreover, the authors in \cite{Adachi-2016} also pointed out that they did not know whether there are solutions for the equation (\ref{zero-mass}) with $p=\frac{N+2}{N-2}.$ Particularly, in Theorem \ref{result-zero-mass}, we draw the definite conclusion about this case by using the Pohozeav identity.
\end{remark}
Theorem \ref{result-zero-mass} shows that the structure of solutions  changes along with the variations of the power $p$ and we remark that the solvability of the equation (\ref{zero-mass}) heavily depends on the power $p.$ Let us explain the main reason for such a rich phenomenon. On  one hand, $f(v)\rightarrow v^{p}$ as $v\rightarrow 0.$ On the other hand, $f(v)\rightarrow 2^{\frac{p-3}{4}}v^{\frac{p-1}{2}}$ as $v\rightarrow +\infty.$ That is, the nonlinearity $f$ is not a pure power of $v$ but $f$ has both $H^{1}$-subcritical and $H^{1}$-supercritical growth in $v>0.$ In \cite{Tang-1997}, the authors consider a similar model
$$
f(u)=\begin{cases}u^{p}\ \ \ u\geq 1;\\
u^{q}\ \ \ u<1,\end{cases}
$$
where $1<p<\frac{N+2}{N-2}<q$ and give an almost complete description for the structure of positive radial solutions by a shooting argument.

The following result is about the fast decaying solutions of the equation (\ref{WZQ-equation}).
\begin{theorem}\label{result-2}
Assume that
\begin{equation}\label{V-condition}
V>0,\ V\in L^{\infty}(\mathbb{R}^{N})\ \mbox{and}\ V(x)=o(|x|^{-2})\ \mbox{as}\ |x|\rightarrow +\infty
\end{equation}
hold.
% $V>0$ with the maximum point $0,$ $V\in L^{\infty}(\mathbb{R}^{N})$ and (\ref{V}) holds.
Then for $\varepsilon$ sufficiently small
the problem (\ref{WZQ-equation}) has a positive fast decaying solution if $\frac{N+2}{N-2}<p<\frac{3N+2}{N-2}.$
\end{theorem}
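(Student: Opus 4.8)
The plan is to treat (\ref{WZQ-equation-change}) as a singular perturbation of the zero-mass equation (\ref{zero-mass-change}) and to construct the solution by a finite-dimensional Lyapunov--Schmidt reduction around the fast decaying profile supplied by Theorem \ref{result-zero-mass}(2). Because $f$ is $H^1$-supercritical near $v=0$ (there $f(v)\sim v^{p}$), the natural energy functional fails to be $C^1$ on $D^{1,2}(\mathbb{R}^N)$, so a global variational treatment is unavailable; instead I would work in weighted pointwise spaces in which $f$ is handled as a genuine map and the scheme is run by the contraction mapping principle. Let $v_0$ be the unique fast decaying radial solution of $\triangle v+f(v)=0$, so that $v_0(x)\sim c|x|^{2-N}$ and $\nabla v_0(x)=O(|x|^{1-N})$, and recall $G^{-1}(v)\sim v$ as $v\to0$. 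For a center $\xi\in\mathbb{R}^N$ write $v=v_0(\cdot-\xi)+\phi$; since $\triangle v_0+f(v_0)=0$, the equation for $\phi$ reads
\begin{equation*}
L_\xi\phi:=\triangle\phi+f'(v_0(\cdot-\xi))\phi=-N_\xi(\phi)+\varepsilon V(x)\,\frac{G^{-1}(v_0(\cdot-\xi)+\phi)}{g(G^{-1}(v_0(\cdot-\xi)+\phi))},
\end{equation*}
where $N_\xi(\phi)=f(v_0(\cdot-\xi)+\phi)-f(v_0(\cdot-\xi))-f'(v_0(\cdot-\xi))\phi$ collects the superlinear remainder.

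First I would develop the linear theory in weighted norms adapted to the decay $|x|^{2-N}$, such as $\|\phi\|_{*}=\sup_{\mathbb{R}^N}(1+|x-\xi|)^{N-2}|\phi(x)|$ together with a matching weighted norm on the right-hand side. The source term is then $O(\varepsilon)$ in the target space: using $G^{-1}(v_0)\sim v_0$ and the hypothesis $V(x)=o(|x|^{-2})$, the error $\varepsilon V(x)\frac{G^{-1}(v_0)}{g(G^{-1}(v_0))}$ decays like $\varepsilon\,o(|x|^{-N})$, which is admissible. The crucial input is the \emph{nondegeneracy} of $v_0$: I would show that, among functions decaying at infinity, $\ker L_0=\operatorname{span}\{\partial_{x_1}v_0,\dots,\partial_{x_N}v_0\}$. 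Expanding in spherical harmonics, the mode $k=1$ produces exactly the translations $\partial_{x_i}v_0$ (whose radial profile is $v_0'(r)$, of one sign); the radial mode $k=0$ carries no decaying kernel, which I would extract from the uniqueness and transversality of the fast decaying solution in the shooting analysis behind Theorem \ref{result-zero-mass}(2); and the modes $k\ge2$ carry no decaying kernel by a Sturm/maximum-principle comparison, the extra term $-k(k+N-2)/r^{2}$ rendering the radial operator strictly more coercive. This yields a bounded right inverse of $L_\xi$ on the $L^2$-orthogonal complement of $K_\xi=\operatorname{span}\{\partial_{x_i}v_0(\cdot-\xi)\}$.

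With the linear theory in hand, for each $\xi$ I would solve the \emph{projected} problem
\begin{equation*}
L_\xi\phi=\Pi_\xi^{\perp}\!\Big[-N_\xi(\phi)+\varepsilon V\,\tfrac{G^{-1}(v_0(\cdot-\xi)+\phi)}{g(G^{-1}(v_0(\cdot-\xi)+\phi))}\Big]
\end{equation*}
for a unique $\phi=\phi_\xi$ with $\|\phi_\xi\|_{*}=O(\varepsilon)$, via the Banach fixed point theorem; here $p>1$ guarantees the Lipschitz estimate $\|N_\xi(\phi_1)-N_\xi(\phi_2)\|\le C(\|\phi_1\|_{*}+\|\phi_2\|_{*})\|\phi_1-\phi_2\|_{*}$ needed for the contraction. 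This produces Lagrange multipliers $c_i(\xi)$, and it remains to solve the reduced equations $c_i(\xi)=0$. Testing the equation against $\partial_{x_j}v_0(\cdot-\xi)$ and using the identity $\frac{G^{-1}(v_0)}{g(G^{-1}(v_0))}\,\partial_{x_j}v_0=\tfrac12\partial_{x_j}\big(|G^{-1}(v_0)|^{2}\big)$, the leading part of $c_j(\xi)$ is, up to a nonzero constant, $\varepsilon\,\partial_{\xi_j}W(\xi)$ with $W(\xi)=\int_{\mathbb{R}^N}V(x)\,|G^{-1}(v_0(x-\xi))|^{2}\,dx$. Since $V>0$ and $V=o(|x|^{-2})$, $W$ is positive, finite (by Hardy's inequality, as $|G^{-1}(v_0)|^{2}=O(|x|^{2(2-N)})$ and $V\le C|x|^{-2}$), and tends to $0$ as $|\xi|\to\infty$ (using the decay of $V$ and of $v_0$ simultaneously in the far region). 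Hence $W$ attains its maximum at a finite $\xi^{\ast}$; equivalently, maximizing the reduced energy over $\xi$ in a large ball gives an interior critical point, which furnishes a genuine zero of $\xi\mapsto(c_i(\xi))$. Finally, smallness of $\phi_{\xi^{\ast}}$ relative to $v_0$ gives positivity $v=v_0(\cdot-\xi^{\ast})+\phi_{\xi^{\ast}}>0$ and the decay $v=O(|x|^{2-N})$, whence $u=G^{-1}(v)=O(|x|^{2-N})$ is the desired fast decaying solution.

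The main obstacle is the nondegeneracy of $v_0$ together with the accompanying weighted Fredholm theory. Because $f$ is \emph{not} a pure power --- it behaves like $v^{p}$ near $0$ and like $2^{(p-3)/4}v^{(p-1)/2}$ near $+\infty$ --- none of the standard nondegeneracy results for ground states apply, so the kernel computation must be carried out directly from the mixed-growth structure of $f$ and the ODE satisfied by $v_0$. Moreover, the slow $|x|^{2-N}$ decay (so that $v_0\notin L^2$ when $N\le4$) forces the use of $D^{1,2}$ and weighted pointwise spaces rather than the usual $H^1$ framework; this is precisely what keeps the reduced map and the relevant integrals well-defined in spite of the supercritical growth of $f$ at the origin.
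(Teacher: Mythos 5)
Your proposal follows essentially the same route as the paper: a Lyapunov--Schmidt reduction around the translated fast-decaying zero-mass profile, nondegeneracy of that profile established mode by mode (the radial mode from the uniqueness/shooting analysis, the higher modes by the maximum principle), a projected linear problem solved in weighted sup-norms, a contraction argument for the perturbation, and a reduced finite-dimensional equation whose leading term is $\varepsilon\,\partial_{\xi_j}\int_{\mathbb{R}^N} V\,v_f^2$, which vanishes at a maximizer of that integral. The only real difference is cosmetic: the paper takes the weight exponent $\sigma$ strictly below $N-2$ (which is what makes the removable-singularity/Liouville step in the a priori estimate and the inversion of the linearized operator work at the borderline decay $o(|x|^{-N})$ of the error), rather than exactly $N-2$ as in your sample norm, and for $1<p<2$ the Lipschitz bound on the superlinear remainder only holds with the H\"older exponent $p-1$ in place of $1$.
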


Compared with Theorem \ref{result-zero-mass}, it is natural to ask whether the nonexistence of a fast decaying solution remains true for (\ref{WZQ-equation}) when $p\geq \frac{3N+2}{N-2}.$ This may be in general a difficult question to answer if no other conditions imposed on $V(x).$ For the special case $x\cdot \nabla V(x)+2V(x)\geq 0,$ the authors in \cite{Severo-2017} show the nonexistence results of fast decay solutions by a Pohozeav identity for the equation (\ref{WZQ-equation}) in the case $p\geq \frac{3N+2}{N-2}$ and $\varepsilon=1.$

Our final result concerns the existence of slow decaying solutions.

\begin{theorem}\label{result-3}
Assume that $\varepsilon=1.$ Then
the problem (\ref{WZQ-equation}) has a continuum of solutions $u_{\lambda}(x)$ such that $\underset{\lambda\rightarrow 0}{\lim}u_{\lambda}(x)=0$ uniformly in $\mathbb{R}^{N}$ either
$N\geq 4,$ $p>\frac{N+1}{N-3}$ and the condition (\ref{V-condition}) holds or $N\geq 3,$ $\frac{N+2}{N-2}<p< \frac{N+1}{N-3}$ and there exist $C>0,$ $\mu>N$ such that
\begin{equation}\label{slow-V-condition-2}
V(x)\leq C|x|^{-\mu}\ \mbox{for}\  x\in\mathbb{R}^{N}.
\end{equation}
\end{theorem}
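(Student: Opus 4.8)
The plan is to treat \eqref{WZQ-equation-change} with $\varepsilon=1$ as a perturbation of the zero mass equation \eqref{zero-mass-change} and to build the continuum by perturbing the one-parameter family of slow decaying radial solutions furnished by Theorem~\ref{result-zero-mass}(3). Writing $w_{\lambda}$ for the member of this family with $\|w_{\lambda}\|_{\infty}\to 0$ as $\lambda\to 0$, the fact that $f(v)\sim v^{p}$ near $v=0$ together with the scaling of the limiting pure power equation gives $w_{\lambda}(x)\sim c_{\infty}|x|^{-2/(p-1)}$ at infinity, which is slower than $|x|^{2-N}$ since $\frac{2}{p-1}<N-2$ in the supercritical range. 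I would then seek a solution of the form $v=w_{\lambda}+\phi$. Using $\Delta w_{\lambda}+f(w_{\lambda})=0$, the correction must solve
\begin{equation*}
L_{\lambda}\phi:=\Delta\phi+f'(w_{\lambda})\phi=V(x)\frac{G^{-1}(w_{\lambda}+\phi)}{g(G^{-1}(w_{\lambda}+\phi))}-N_{\lambda}(\phi),
\end{equation*}
where $N_{\lambda}(\phi)=f(w_{\lambda}+\phi)-f(w_{\lambda})-f'(w_{\lambda})\phi$ collects the quadratic terms.

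The scheme is a fixed point argument in weighted spaces adapted to the decay $|x|^{-2/(p-1)}$. First I would estimate the error $E_{\lambda}:=V(x)\,G^{-1}(w_{\lambda})/g(G^{-1}(w_{\lambda}))$; since $s/g(s)\sim s$ and $G^{-1}(v)\sim v$ for small arguments, $E_{\lambda}\sim V\,w_{\lambda}$, which under \eqref{V-condition} decays like $o(|x|^{-2-2/(p-1)})$ and under \eqref{slow-V-condition-2} like $|x|^{-\mu-2/(p-1)}$; in either case $E_{\lambda}$ is small in the relevant weighted norm. It is worth noting that, because $V>0$ and $s\mapsto s/g(s)$ is positive, $w_{\lambda}$ is in fact a supersolution of the perturbed equation, which pins down the expected sign of the correction and provides an a priori barrier.

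The heart of the matter is the linear theory: solving $L_{\lambda}\phi=h$ with weighted estimates that are uniform as $\lambda\to 0$. In the far field $f'(w_{\lambda})\sim a|x|^{-2}$ with $a=p\,c_{\infty}^{p-1}$, so $L_{\lambda}$ is asymptotically of Hardy type and its invertibility in a weighted space $|x|^{-\theta}$ hinges on placing $\theta$ between the indicial exponents $\tfrac{N-2}{2}\pm\tfrac12\sqrt{(N-2)^{2}-4a}$ and on matching $\theta$ to the decay of the source. When $V\le C|x|^{-\mu}$ with $\mu>N$ the source decays faster than $|x|^{-N}$, its image under $L_{\lambda}^{-1}$ can be taken to decay strictly faster than $w_{\lambda}$, so the correction is a genuine lower order term and the slow decay of $v_{\lambda}$ is inherited from $w_{\lambda}$; this is what makes the slowly decaying range $\frac{N+2}{N-2}<p<\frac{N+1}{N-3}$ tractable. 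When only $V=o(|x|^{-2})$ is assumed, the source and $w_{\lambda}$ become comparable at infinity and the inversion barely closes: the correction stays lower order precisely when $\frac{2}{p-1}$ is small enough, which is the threshold $p>\frac{N+1}{N-3}$. With the linear estimate in hand, $N_{\lambda}$ is controlled as a higher order term in the weighted norm and the Banach fixed point theorem produces a unique small $\phi=\phi_{\lambda}$.

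Finally $v_{\lambda}=w_{\lambda}+\phi_{\lambda}$ solves \eqref{WZQ-equation-change}, whence $u_{\lambda}=G^{-1}(v_{\lambda})$ solves \eqref{WZQ-equation}; the family depends continuously on $\lambda$ and, since $0<v_{\lambda}\le w_{\lambda}+|\phi_{\lambda}|\to 0$ uniformly, we get $u_{\lambda}\to 0$ uniformly as $\lambda\to 0$, which is the asserted continuum. The main obstacle is exactly the uniform-in-$\lambda$ linear theory for $L_{\lambda}$: the problem is genuinely supercritical, the Hardy coefficient $a$ can exceed the critical value $(N-2)^{2}/4$ (so that the indicial exponents become complex and no sign-definite power barrier exists), and one must instead run weighted estimates that hold across the entire transition of $f$ from its $v^{p}$ behavior near the origin to its $v^{(p-1)/2}$ behavior for large values, while tracking how the constants and the admissible weights degenerate as $p\downarrow\frac{N+1}{N-3}$ and as $\lambda\to 0$.
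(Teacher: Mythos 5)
Your overall strategy (perturb off a slow-decaying profile, weighted linear theory adapted to the decay $|x|^{-2/(p-1)}$, then a contraction) is the right one, and your identification of the threshold $p=\frac{N+1}{N-3}$ through the Hardy quotient $p\,c_\infty^{p-1}$ versus $(N-2)^2/4$ is exactly the mechanism at work. But there is a genuine gap in the range $N\geq 3$, $\frac{N+2}{N-2}<p<\frac{N+1}{N-3}$: there the indicial roots are complex, no sign-definite barrier exists, and the linearized operator does \emph{not} admit a uniform a priori estimate in the natural weighted space because of the translation modes $\partial w/\partial x_i$, which obstruct the estimate precisely in this range. A plain Banach fixed point therefore does not close. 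The paper handles this by (i) introducing a free translation parameter $\xi$ in the rescaling $v\mapsto\lambda^{2/(p-1)}v(\lambda x+\xi)$, (ii) solving only the \emph{projected} problem with Lagrange multipliers $\sum_{i}c_iZ_i$ added to the right-hand side (Lemma \ref{4-slow-linear}(2)), and (iii) performing a finite-dimensional reduction: one shows
$F_\lambda^{(j)}(\xi)=C_p\lambda^{\frac{4}{p-1}}\int_{\mathbb{R}^N}w^{p+2}(x+\xi)\frac{\partial w}{\partial x_j}(x+\xi)\,\mbox{d}x+o(\lambda^{\frac{4}{p-1}})$, checks that the contributions of the $V$-term, of $N(\phi)$ and of $P(\phi)$ are all $o(\lambda^{\frac{4}{p-1}})$ --- this is where the hypothesis $\mu>N$ in (\ref{slow-V-condition-2}) enters --- and then uses degree theory to find $\xi_\lambda$ with $F_\lambda(\xi_\lambda)=0$, i.e.\ $c_i=0$ for all $i$. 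None of this appears in your proposal; without it the case $\frac{N+2}{N-2}<p<\frac{N+1}{N-3}$ is not proved.

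A second, smaller issue: you flag the uniform-in-$\lambda$ invertibility of $L_\lambda=\Delta+f'(w_\lambda)$ as the main obstacle but leave it unresolved. The paper avoids building this theory from scratch by first rescaling the equation at scale $\lambda$, so that the approximate solution is the fixed profile $w$ with $\Delta w+w^p=0$, $w(0)=1$, the linearization becomes $\Delta-V_\lambda+pw^{p-1}$ with $V_\lambda$ a small perturbation, and the required linear lemma can be quoted from D\'avila--del Pino--Musso--Wei. Your $w_\lambda$ is (up to the correction already constructed in the proof of Theorem \ref{result-zero-mass}(3)) just the rescaled version of $w$, so your route is not wrong in principle; but as written the key uniform estimate is asserted rather than obtained, and the terms specific to the quasilinear change of variables (the correction $C_p\lambda^{\frac{4}{p-1}}w^{p+2}$ hidden in $S(w)$ and the operator $P(\phi)$) are never estimated, even though it is precisely the first of these that drives the reduced equations in the low range of $p$.
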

\begin{remark}
In this theorem, we answer the question raised in \cite{Liu-2003} for $p=\frac{3N+2}{N-2}.$
\end{remark}

The proofs of Theorems \ref{result-2} and \ref{result-3} are  based perturbative approach,  introduced by Davila, del Pino, Musso and  Wei   \cite{Pino-Wei-2007,Pino-2008,Pino-Wei-2008,Wang-Wei-2012} in the study of
fast and slow decaying solutions for second order or nonlinear Schr\"{o}dinger equations and exterior domain problems. Some of our ideas are motivated from these papers.

In the fast-decaying case, we consider the problem (\ref{WZQ-equation-change}) as small perturbation of the problem $(\ref{zero-mass-change})$ when $\varepsilon>0$ is sufficiently small. For a point $\xi\in\mathbb{R}^{N}$ used as the reference origin, the function $v_{f}(x+\xi)$ is considered as an initial approximation, where $v_{f}$ is a solution of (\ref{zero-mass-change}). This function will constitute a good approximation for small $\varepsilon.$ By adjusting $\xi,$ we prove that the solutions we want can be achieved.

As for the slow decay solution of the equation (\ref{WZQ-equation}), we set $\varepsilon=1$ and consider the equation with a parameter $\lambda$ by means of replacing the variable $v$ in the equation (\ref{WZQ-equation-change}) by $\lambda^{\frac{2}{p-1}}v(\lambda x+\xi)$
\begin{equation}\label{WZQ-equation-change-lambda}
\begin{cases}
\triangle v-V_{\lambda}(x)\lambda^{-\frac{2}{p-1}}\frac{G^{-1}(\lambda^{\frac{2}{p-1}}v)}{g(G^{-1}(\lambda^{\frac{2}{p-1}}v))}+\lambda^{-\frac{2p}{p-1}}f(\lambda^{\frac{2}{p-1}}v)=0,\ x\in\mathbb{R}^{N};
\\ v>0\  \mbox{and}\ \underset{|x|\rightarrow \infty}{\lim} v(x)=0,
\end{cases}
\end{equation}
where $\lambda>0,$ $\xi\in\mathbb{R}^{N}$ and $V_{\lambda}(x)=\lambda^{-2}V(\frac{x-\xi}{\lambda}).$
% If $v$ is uniformly bounded, then, jointly with the properties of $G^{-1}(v)=v+o(1)$ and $g(G^{-1}(v))=1+o(1)$ as $v\rightarrow 0,$
 We observe that $\lambda^{-\frac{2}{p-1}}\frac{G^{-1}(\lambda^{\frac{2}{p-1}}v)}{g(G^{-1}(\lambda^{\frac{2}{p-1}}v))}\rightarrow v$ and $\lambda^{-\frac{2p}{p-1}}f(\lambda^{\frac{2}{p-1}}v)\rightarrow v^{p}$ as $\lambda\rightarrow 0.$ Thus the problem may be regarded as small perturbation of the problem
$$
\triangle v-V_{\lambda}v+v^{p}=0
$$
when $\lambda>0$ is sufficiently small. Consequently, infinitely many positive solutions with slow decay $O(|x|^{-\frac{2}{p-1}})$ at infinity can be constructed similar to the perturbative procedure introduced by  Davila, del Pino, Musso and Wei \cite{Pino-Wei-2007}.

In this paper, we make use of the following notations: the symbol $C$ denotes a positive constant (possibly different) independent with $\lambda.$ $A\sim B$ if and only if there exist two positive constants $a,b$ such that $aA\leq B\leq bA.$ $ v_f$ denotes the unique fast decaying solution of (\ref{zero-mass-change}).

\section{Proof of Theorem \ref{result-zero-mass}}

In this section, we analyze the structure of positive decaying solutions (\ref{zero-mass}). We first prove the nonexistence of fast-decaying solutions for $p\leq \frac{N+2}{N-2}$ or $p\geq \frac{3N+2}{N-2}$ by using the Pohozaev identity. Then we show the existence of fast decaying solution for (\ref{zero-mass}) by using the classical Berestycki-Lions condition in \cite{Berestycki-Lions} for $\frac{N+2}{N-2}<p<\frac{3N+2}{N-2}.$ Finally we use a perturbative approach to prove the existence of a family of slow-decaying solutions for $p>\frac{N+2}{N-2}.$

To prove the nonexistence results for the equation (\ref{zero-mass-change}), we recall the following Pohozaev identity.

\begin{lemma}\label{Pohozaev-1} {\rm (Pohozaev identity)} Suppose $F(x,u,r)\in C^{1}(\mathbb{R}^{N}\times \mathbb{R}\times \mathbb{R}^{N})$ satisfies
\begin{equation}\label{Pohozaev-equation}
{\rm div} F_{r}(x,u,\nabla u)=F_{u}(x,u,\nabla u),
\end{equation}
where
$$
F_{r}(x,u,r)=(F_{r_{1}}(x,u,r),F_{r_{2}}(x,u,r),\cdots,F_{r_{N}}(x,u,r)),\ r=(r_{1},r_{2},\cdots,r_{N}),
$$
$$
F_{r_{i}}(x,u,r)=\frac{\partial F(x,u,r)}{\partial r_{i}},\ i=1,2,\cdots,N
$$
and
$$
F_{u}(x,u,r)=\frac{\partial F(x,u,r)}{\partial u}.
$$
Then, if $F(x,u,\nabla u),$ $x\cdot F_{x}(x,u,\nabla u)$ and $F_{r}(x,u,\nabla u)\cdot\nabla u\in L^{1}(\mathbb{R}^{N}),$ there holds the following identity
\begin{equation}\label{Pohozaev-identity}
N\int_{\mathbb{R}^{N}}F(x,u,\nabla u)\mbox{d}x+\int_{\mathbb{R}^{N}}x\cdot F_{x}(x,u,\nabla u)\mbox{d}x-\int_{\mathbb{R}^{N}}F_{r}(x,u,\nabla u)\cdot \nabla u\mbox{d}x=0.
\end{equation}
\end{lemma}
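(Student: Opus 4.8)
The plan is to prove the identity by the standard Pohozaev/dilation argument, organized around a single divergence-form vector field so that the troublesome second-order terms cancel algebraically before any limit is taken. Set $\Phi(x)=F(x,u(x),\nabla u(x))$ and introduce the Pohozaev vector field $P = \Phi\, x - (x\cdot\nabla u)\,F_{r}(x,u,\nabla u)$. The first step is to compute $\mathrm{div}\,P$ pointwise by the chain rule: expanding $x\cdot\nabla\Phi = x\cdot F_{x} + F_{u}\,(x\cdot\nabla u) + \sum_{i,j}x_{j} F_{r_{i}}\partial_{j}\partial_{i}u$ together with $\mathrm{div}\big((x\cdot\nabla u)F_{r}\big) = F_{r}\cdot\nabla u + \sum_{i,j}x_{j} F_{r_{i}}\partial_{i}\partial_{j}u + (x\cdot\nabla u)\,\mathrm{div}\,F_{r}$, and then inserting the Euler--Lagrange relation (\ref{Pohozaev-equation}), namely $\mathrm{div}\,F_{r} = F_{u}$. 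Two cancellations occur: the two copies of $F_{u}\,(x\cdot\nabla u)$ drop out, and the two double sums over second derivatives cancel by the symmetry $\partial_{i}\partial_{j}u=\partial_{j}\partial_{i}u$. What remains is the clean pointwise identity $\mathrm{div}\,P = N\Phi + x\cdot F_{x} - F_{r}\cdot\nabla u$; that is, $\mathrm{div}\,P$ is exactly the integrand of (\ref{Pohozaev-identity}).

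The second step is to integrate this over the ball $B_{R}$ and apply the divergence theorem, which gives $\int_{B_{R}}\big(NF + x\cdot F_{x} - F_{r}\cdot\nabla u\big)\,\mathrm{d}x = \int_{\partial B_{R}} P\cdot\nu\,\mathrm{d}\sigma$, where on $\partial B_{R}$ one has $P\cdot\nu = R\,\Phi - (x\cdot\nabla u)(F_{r}\cdot\nu)$. By the three integrability hypotheses, the left-hand side converges as $R\to\infty$ to the full-space integral appearing in (\ref{Pohozaev-identity}); it then remains to show that the boundary flux tends to zero along a suitable sequence of radii.

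The main obstacle is precisely this vanishing of the boundary flux, since the bare divergence-theorem computation only guarantees that $\int_{\partial B_{R}}P\cdot\nu\,\mathrm{d}\sigma$ converges, not that its limit is zero. The standard device is to avoid forcing every radius to work and instead to select a sequence $R_{n}\to\infty$ along which the spherical integrals of $|\Phi|$ and of $|x\cdot\nabla u|\,|F_{r}|$ both vanish; this is possible because any $h\in L^{1}(\mathbb{R}^{N})$ satisfies $\liminf_{R\to\infty}R\int_{\partial B_{R}}|h|\,\mathrm{d}\sigma=0$ (otherwise $\int_{\partial B_{R}}|h|\,\mathrm{d}\sigma\gtrsim R^{-1}$ for large $R$, contradicting $\int_{0}^{\infty}\big(\int_{\partial B_{R}}|h|\,\mathrm{d}\sigma\big)\mathrm{d}R<\infty$). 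Applying this along $R_{n}$ kills $P\cdot\nu$ on $\partial B_{R_{n}}$, and passing to the limit yields (\ref{Pohozaev-identity}). I would carry out the computation of $\mathrm{div}\,P$ on $B_{R}$ throughout, rather than on all of $\mathbb{R}^{N}$, so that the second-order terms---which are not separately assumed integrable---cancel before any limit is taken; this is the point where keeping everything in divergence form, instead of testing the equation directly against the multiplier $x\cdot\nabla u$, pays off.
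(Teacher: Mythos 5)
The paper does not actually prove this lemma --- it is quoted from Pucci and Serrin \cite{Pucci-1986} with the proof omitted --- so there is no in-text argument to compare against. Your proposal is the standard dilation-identity proof, and the pointwise computation is correct: with $P=\Phi\,x-(x\cdot\nabla u)F_{r}$ the two occurrences of $F_{u}(x\cdot\nabla u)$ and the two symmetric second-order sums cancel exactly as you say, giving $\mathrm{div}\,P=N\Phi+x\cdot F_{x}-F_{r}\cdot\nabla u$ wherever $u\in C^{2}$ (a regularity assumption you use silently but which is satisfied in the application, Lemma \ref{Pohozaev-our-equation}). The divergence theorem on $B_{R}$, the identification of $P\cdot\nu=R\Phi-(x\cdot\nabla u)(F_{r}\cdot\nu)$, and the observation that $h\in L^{1}(\mathbb{R}^{N})$ forces $\liminf_{R\to\infty}R\int_{\partial B_{R}}|h|\,d\sigma=0$ are all correct.

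The one genuine gap is in killing the second boundary term. Along your sequence $R_{n}$ you need $\int_{\partial B_{R_{n}}}|x\cdot\nabla u|\,|F_{r}|\,d\sigma\to 0$, and since $|x\cdot\nabla u|\,|F_{r}|\leq R\,|\nabla u|\,|F_{r}|$ on $\partial B_{R}$, the $\liminf$ device requires $|\nabla u|\,|F_{r}|\in L^{1}(\mathbb{R}^{N})$. The hypothesis only supplies $F_{r}\cdot\nabla u\in L^{1}$, and Cauchy--Schwarz gives $|F_{r}\cdot\nabla u|\leq|F_{r}|\,|\nabla u|$, which is the wrong direction: cancellation in the inner product could make $F_{r}\cdot\nabla u$ integrable while $|F_{r}|\,|\nabla u|$ is not. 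So as stated your argument does not close at the claimed level of generality; you should either add $|F_{r}|\,|\nabla u|\in L^{1}$ as a hypothesis or impose a structural condition making $F_{r}\cdot\nabla u$ comparable to $|F_{r}|\,|\nabla u|$. This is harmless for the way the lemma is used in this paper: there $F_{r}(x,u,\nabla u)=g^{2}(u)\nabla u$ is parallel to $\nabla u$, so $|F_{r}|\,|\nabla u|=F_{r}\cdot\nabla u$ exactly and the stated hypotheses ($|\nabla u|^{2}$, $u^{2}|\nabla u|^{2}\in L^{1}$) do suffice.
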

We omit the proof of this lemma, since it can be mainly found in \cite{Pucci-1986}.

To present the Pohozaev identity associated to (\ref{zero-mass}), we rewrite the equation (\ref{zero-mass}) as
\begin{equation}\label{our-equation-Pohozaev}
\mbox{div}\left( g^2(u)\nabla u\right)-g(u)g'(u)|\nabla u|^2+ u^{p}=0.
\end{equation}
Thus, the integrands in (\ref{Pohozaev-identity}) can be expressed as
$$
F(x,u,\nabla u)=\frac{1}{2}g^2(u) |\nabla u|^2-\frac{1}{p+1}u^{p+1},
$$
$$
x\cdot F_{x}(x,u,\nabla u)=0
$$
and
$$
F_{r}(x,u,\nabla u)\cdot \nabla u=g^2(u) |\nabla u|^2.
$$
%Moreover, if $|u|\leq u_{0}<1,$ we have
%$$
%\left| 1-\frac{u^2}{1-u^2}\right|\leq C.
%$$
Consequently, we achieve the following lemma based on Lemma \ref{Pohozaev-1} under the conditions
$|\nabla u|^2,$ $u^2|\nabla u|^2$ and $u^{p+1}\in L^{1}(\mathbb{R}^{N}).$
\begin{lemma}\label{Pohozaev-our-equation}
Suppose that $u\in C^{2}(\mathbb{R}^{N})$ is a solution of (\ref{zero-mass}). Then
\begin{equation}\label{Pohozaev-identity-our-equation}
\begin{split}
\frac{N-2}{2}\int_{\mathbb{R}^{N}}\left(1+2 u^2\right)|\nabla u|^2\mbox{d}x=\frac{N}{p+1}\int_{\mathbb{R}^{N}}|u|^{p+1}\mbox{d}x
\end{split}
\end{equation}
if $|\nabla u|^2,$ $u^2|\nabla u|^2$ and $u^{p+1}\in L^{1}(\mathbb{R}^{N}).$
\end{lemma}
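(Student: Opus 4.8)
The plan is to obtain \eqref{Pohozaev-identity-our-equation} as a direct specialization of the general Pohozaev identity in Lemma \ref{Pohozaev-1}, applied to the functional already displayed,
$$F(x,u,r)=\tfrac{1}{2}g^{2}(u)|r|^{2}-\tfrac{1}{p+1}u^{p+1},\qquad g^{2}(s)=1+2s^{2}.$$
Since $g$ is smooth and $u\in C^{2}(\mathbb{R}^{N})$ is positive, $F$ is of class $C^{1}$ in $(x,u,r)$, so the differentiations below are legitimate and Lemma \ref{Pohozaev-1} is applicable once its hypotheses are checked.

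First I would verify the structural condition \eqref{Pohozaev-equation}, namely $\mathrm{div}\,F_{r}(x,u,\nabla u)=F_{u}(x,u,\nabla u)$. Differentiating gives $F_{r_{i}}=g^{2}(u)r_{i}$, hence $F_{r}(x,u,\nabla u)=g^{2}(u)\nabla u$, while $F_{u}=g(u)g'(u)|\nabla u|^{2}-u^{p}$. Therefore \eqref{Pohozaev-equation} reads
$$\mathrm{div}\big(g^{2}(u)\nabla u\big)-g(u)g'(u)|\nabla u|^{2}+u^{p}=0,$$
which is precisely the rewritten form \eqref{our-equation-Pohozaev} of the equation; thus the hypothesis holds exactly because $u$ solves \eqref{zero-mass}. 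Along the way one records the remaining two integrands: $F$ carries no explicit dependence on $x$, so $x\cdot F_{x}\equiv 0$, and $F_{r}\cdot\nabla u=g^{2}(u)|\nabla u|^{2}$.

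Next I would match the integrability requirements of Lemma \ref{Pohozaev-1}. Using $g^{2}(u)=1+2u^{2}$ one has $F=\tfrac{1}{2}|\nabla u|^{2}+u^{2}|\nabla u|^{2}-\tfrac{1}{p+1}u^{p+1}$ and $F_{r}\cdot\nabla u=|\nabla u|^{2}+2u^{2}|\nabla u|^{2}$, so the assumptions $|\nabla u|^{2},\,u^{2}|\nabla u|^{2},\,u^{p+1}\in L^{1}(\mathbb{R}^{N})$ guarantee that $F(x,u,\nabla u)$, $x\cdot F_{x}(x,u,\nabla u)$ and $F_{r}(x,u,\nabla u)\cdot\nabla u$ all lie in $L^{1}(\mathbb{R}^{N})$, which are exactly the conditions under which \eqref{Pohozaev-identity} holds.

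Finally, substituting the three integrands into \eqref{Pohozaev-identity} and using $x\cdot F_{x}\equiv 0$ yields
$$N\int_{\mathbb{R}^{N}}\Big(\tfrac{1}{2}g^{2}(u)|\nabla u|^{2}-\tfrac{1}{p+1}u^{p+1}\Big)\mathrm{d}x-\int_{\mathbb{R}^{N}}g^{2}(u)|\nabla u|^{2}\,\mathrm{d}x=0.$$
Collecting the gradient terms gives $\tfrac{N-2}{2}\int g^{2}(u)|\nabla u|^{2}\,\mathrm{d}x=\tfrac{N}{p+1}\int u^{p+1}\,\mathrm{d}x$, and replacing $g^{2}(u)$ by $1+2u^{2}$, together with $u^{p+1}=|u|^{p+1}$ (legitimate as $u>0$), produces \eqref{Pohozaev-identity-our-equation}. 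I do not expect a genuine obstacle here: all the analytic content sits in Lemma \ref{Pohozaev-1}, and this step is a bookkeeping specialization. The only point requiring care is the matching of the integrability hypotheses, which is exactly why the three $L^{1}$ conditions are imposed in the statement; they are what make every integral above finite and ensure that the boundary terms implicit in Lemma \ref{Pohozaev-1} vanish.
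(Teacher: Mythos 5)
Your proposal is correct and follows exactly the paper's route: the paper likewise obtains the lemma by specializing the Pucci--Serrin identity of Lemma \ref{Pohozaev-1} to $F(x,u,r)=\frac{1}{2}g^{2}(u)|r|^{2}-\frac{1}{p+1}u^{p+1}$, noting $x\cdot F_{x}=0$ and $F_{r}\cdot\nabla u=g^{2}(u)|\nabla u|^{2}$, with the same three $L^{1}$ hypotheses. You merely spell out the verification of the structural condition \eqref{Pohozaev-equation} and the final bookkeeping, which the paper leaves implicit.
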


Equations (\ref{zero-mass}) can be rewritten as
\begin{equation}\label{zero-mass-g}
-\mbox{div}(g^2(u)\nabla u)+g(u)g'(u)|\nabla u|^2=u^{p},\ x\in\mathbb{R}^{N},
\end{equation}
where $g^2(u)=1+2u^2.$ By Lemma \ref{Pohozaev-our-equation}, the Pohozaev identity associated to (\ref{zero-mass-g}) is
\begin{equation}\label{P-identity}
\frac{N-2}{2}\int_{\mathbb{R}^{N}}|\nabla u|^2\mbox{d}x+(N-2)\int_{\mathbb{R}^{N}}u^2|\nabla u|^2\mbox{d}x=\frac{N}{p+1}\int_{\mathbb{R}^{N}}u^{p+1}\mbox{d}x.
\end{equation}
On the other hand, the classical solution $u\in D^{1,2}(\mathbb{R}^{N})$ of (\ref{zero-mass-g}) satisfies
$$
\int_{\mathbb{R}^{N}}[g^2(u)\nabla u \nabla \phi+g(u)g'(u)|\nabla u|^2\phi]\mbox{d}x=\int_{\mathbb{R}^{N}}u^p\phi\mbox{d}x.
$$
By taking $\phi=u,$ we achieve
\begin{equation}\label{zero-mass-solution}
\int_{\mathbb{R}^{N}}|\nabla u|^2\mbox{d}x+4\int_{\mathbb{R}^{N}}u^2|\nabla u|^2\mbox{d}x=\int_{\mathbb{R}^{N}}u^{p+1}\mbox{d}x.
\end{equation}
Consequently, combining (\ref{P-identity}) and (\ref{zero-mass-solution}), we have
\begin{equation}\label{P-and-solution}
\left[\frac{N-2}{2}-\frac{N}{p+1}\right]\int_{\mathbb{R}^{N}}|\nabla u|^{2}\mbox{d}x+\left[(N-2)-\frac{4N}{p+1}\right]\int_{\mathbb{R}^{N}}u^2|\nabla u|^2\mbox{d}x=0.
\end{equation}
If $p\geq \frac{3N+2}{N-2},$ then $(N-2)-\frac{4N}{p+1}\geq 0$ and $\frac{N-2}{2}-\frac{N}{p+1}>0.$ Therefore, (\ref{P-and-solution}) implies that $u=0$ under this situation.
Similarly, if $p\leq \frac{N+2}{N-2},$ it follows that $(N-2)-\frac{4N}{p+1}< 0$ and $\frac{N-2}{2}-\frac{N}{p+1}\leq 0.$ Thus, (\ref{P-and-solution}) also shows that $u=0.$
So there are no nonzero solutions for (\ref{zero-mass}) if $p\leq \frac{N+2}{N-2}$ or $p\geq \frac{3N+2}{N-2}.$

This proves (1) of Theorem \ref{result-zero-mass}.

Next we prove the existence of fast decaying solutions to (\ref{zero-mass-change}). By the change of variable   $u=G^{-1}(v)$ we only need to consider  (\ref{zero-mass-change}). To this end we recall the following classical proposition  by Berestycki and Lions \cite{Berestycki-Lions}.
\begin{proposition}\label{Prop-B-L}
Suppose that the following assumptions hold:
\begin{itemize}
\item[{\rm (F-1).}] $f(0)=0$ and $ \underset{s\rightarrow 0^{+}}{\overline{\lim}}\frac{f(s)}{s^{l}}\leq 0,$ where $l=\frac{N+2}{N-2};$
\item[{\rm (F-2).}] There exists $\zeta>0$ such that $F(\zeta)>0,$ where $F(\zeta)=\int_{0}^{\zeta}f(s)\mbox{d}s;$
\item[{\rm (F-3).}] Let $\zeta_{0}=\inf\left\{\zeta: \zeta>0,\ F(\zeta)>0\right\}.$ If $f(s)>0$ for all $s>\zeta_{0},$ then $\underset{s\rightarrow +\infty}{\lim}\frac{f(s)}{s^{l}}=0.$
\end{itemize}
Then the problem (\ref{zero-mass-change}) has a positive, spherically symmetric and decreasing (with $r$) solution $v$ such that $v\in D^{1,2}(\mathbb{R}^{N})\cap C^{2}(\mathbb{R}^{N}).$
\end{proposition}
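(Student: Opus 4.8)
The plan is to adapt the constrained minimization scheme of Berestycki and Lions, working in the homogeneous Sobolev space $D^{1,2}(\mathbb{R}^{N})$, which is the natural setting for the zero mass problem (\ref{zero-mass-change}). Since a positive solution is sought, I would first extend $f$ by $f(s)=0$ for $s\le 0$, so that the maximum principle later forces nonnegativity. The key structural remark is that the exponent $l=\frac{N+2}{N-2}$ satisfies $l+1=\frac{2N}{N-2}=2^{*}$; hence (F-1) and (F-3) yield $F(s)=o(s^{2^{*}})$ both as $s\to 0^{+}$ and as $s\to+\infty$, while $F$ stays bounded on compact intervals. Together with the Sobolev embedding $D^{1,2}(\mathbb{R}^{N})\hookrightarrow L^{2^{*}}(\mathbb{R}^{N})$, this makes $J(v):=\int_{\mathbb{R}^{N}}F(v)\,\mbox{d}x$ well defined and continuous on $D^{1,2}(\mathbb{R}^{N})$.

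Next I would study the minimization
$$
\sigma=\inf\Big\{\int_{\mathbb{R}^{N}}|\nabla v|^{2}\,\mbox{d}x\ :\ v\in D^{1,2}(\mathbb{R}^{N}),\ J(v)=1\Big\}.
$$
Hypothesis (F-2) makes the constraint set nonempty: taking $\zeta>0$ with $F(\zeta)>0$ and a radial cut-off equal to $\zeta$ on a large ball gives $J>0$, and the dilation $v(x)\mapsto v(x/t)$ (which multiplies $J$ by $t^{N}$) then normalizes $J=1$. The Sobolev inequality shows $\sigma>0$, since a gradient norm tending to $0$ forces $J\to 0$. The second preliminary step is the reduction to radial functions via Schwarz symmetrization: for any admissible $v$ its symmetric decreasing rearrangement $v^{*}$ satisfies $J(v^{*})=J(v)$ (equimeasurability) and $\int|\nabla v^{*}|^{2}\le\int|\nabla v|^{2}$ (the P\'olya--Szeg\H{o} inequality), so one may minimize over radial nonincreasing functions, which is also the source of the monotonicity of the final solution.

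The hard part will be the compactness of a minimizing sequence, because the zero mass problem is invariant under the dilation $v_{t}(x)=v(x/t)$, under which $\int|\nabla v|^{2}$ scales like $t^{N-2}$ and $J$ like $t^{N}$; this scaling degeneracy is precisely what is absent in the positive mass ($H^{1}$) case. A minimizing sequence $\{v_{n}\}$ is bounded in $D^{1,2}$, so $v_{n}\rightharpoonup v$ along a subsequence and lower semicontinuity gives $\int|\nabla v|^{2}\le\sigma$; the real issue is to show $J(v)=1$ rather than losing the constraint through concentration or spreading. Here the strict smallness $F(s)=o(s^{2^{*}})$ at both ends is decisive: along a sequence concentrating at scale $t_{n}\to0$ the amplitude blows up, where $F(s)/s^{2^{*}}\to0$ annihilates the scale-invariant integral, and along a spreading sequence the amplitude vanishes, where again $F(s)/s^{2^{*}}\to0$; in either degeneracy $J(v_{n})\to0$, contradicting $J(v_{n})=1$. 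Using the radial structure, in particular the Strauss bound $|v_{n}(x)|\le C\|v_{n}\|_{D^{1,2}}|x|^{-(N-2)/2}$ to control the tails, one upgrades weak convergence to $J(v_{n})\to J(v)=1$, so $v$ attains $\sigma$.

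Finally, the Lagrange multiplier rule produces $\theta\in\mathbb{R}$ with $-\triangle v=\theta f(v)$, and the Pohozaev identity for this equation reads $\frac{N-2}{2}\int_{\mathbb{R}^{N}}|\nabla v|^{2}\,\mbox{d}x=N\theta J(v)=N\theta$, so $\theta=\frac{(N-2)\sigma}{2N}>0$. The rescaling $v(x)\mapsto v(x/\sqrt{\theta})$ then turns $v$ into a genuine solution of $\triangle v+f(v)=0$. An elliptic bootstrap, using $f(v)\in L^{q}_{\mathrm{loc}}$ from the growth bounds, gives $v\in C^{2}(\mathbb{R}^{N})$; the rearrangement makes $v$ radial and nonincreasing, the strong maximum principle (with $f(s)=0$ for $s\le 0$) gives $v>0$, and radial $D^{1,2}$ membership together with the Strauss bound yields $v(x)\to0$ as $|x|\to\infty$, completing the verification of the stated properties.
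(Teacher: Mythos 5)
Your proposal is correct, but note that the paper does not prove this proposition at all: it is quoted verbatim as the classical zero-mass existence theorem of Berestycki and Lions and justified only by the citation \cite{Berestycki-Lions}. The argument you reconstruct --- constrained minimization of $\int|\nabla v|^{2}$ over $\{J(v)=1\}$ in $D^{1,2}(\mathbb{R}^{N})$, reduction to radial decreasing functions by Schwarz symmetrization, compactness from the bounds $F(s)=o(s^{2^{*}})$ at $0$ and $\infty$ combined with the radial decay estimate, and removal of the Lagrange multiplier by the dilation $v(\cdot)\mapsto v(\cdot/\sqrt{\theta})$ --- is precisely the proof given in that reference, so your route coincides with the one the paper implicitly relies on.
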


We now  show that $f(s)$ satisfies the conditions (F-1)-(F-3) in Proposition \ref{Prop-B-L}.

By the definition of $f(s),$ we know that (F-2) is trivial. Noticing that $\underset{s\rightarrow 0}{\lim}\frac{G^{-1}(s)}{s}=1,$ we have
$$
\underset{s\rightarrow 0^{+}}{\lim}\frac{f(s)}{s^{l}}=\underset{s\rightarrow 0^{+}}{\lim}\frac{G^{-1}(s)^{p}}{g(G^{-1}(s))s^{l}}=\underset{s\rightarrow 0^{+}}{\lim}\frac{s^{p}}{s^{l}}=0,
$$
which shows that $f(s)$ satisfies the condition (F-1).

To verify the condition (F-3), it suffices to show that
$$
\underset{s\rightarrow +\infty}{\lim}\frac{f(s)}{s^{l}}=0
$$
since $\zeta_{0}=0$ and $f(s)>0$ for all $s>0.$ Combining the fact $\underset{s\rightarrow +\infty}{\lim}\frac{G^{-1}(s)}{\sqrt{s}}=2^{\frac{1}{4}},$ we deduce that
$$
\underset{s\rightarrow +\infty}{\lim}\frac{f(s)}{s^{l}}=\underset{s\rightarrow +\infty}{\lim}\frac{G^{-1}(s)^p}{g(G^{-1}(s))s^{l}}=\underset{s\rightarrow +\infty}{\lim}\frac{2^{\frac{p-3}{4}}s^{\frac{p-1}{2}}}{s^l}=0.
$$

This proves (2) of Theorem \ref{result-zero-mass}.

Finally we prove (3) of Theorem \ref{result-zero-mass}. To prove the existence of slow decay solutions, since we are considering the autonomous case, that is, $V(x)\equiv 0,$ we can restrict to the radially symmetric case. For this reason, we take $v(x)=v(r),$ where $r=|x|.$

We first consider the problem in the entire space
$$
\begin{cases}
\triangle u+u^{p}=0,\ x\in\mathbb{R}^{N};\\
u(0)=1.
\end{cases}
$$
It is well known that this problem possesses a unique positive symmetric solution
$w(|x|)$
whenever $p>\frac{N+2}{N-2}.$ Then all radial solutions to this problem defined in $\mathbb{R}^{N}$ can be expressed as
$$
w_{\lambda}(|x|)=\lambda^{\frac{2}{p-1}}w(\lambda |x|),\ \lambda>0
$$
and, at a main order, one has
$$
w(r)=C_{p,N}r^{-\frac{2}{p-1}}+o(1)\ \mbox{as}\ r=|x|\rightarrow +\infty,
$$
which implies that this behavior is actually common to all solutions $w_{\lambda}(r).$

Since the problem (\ref{zero-mass-change}) does not carry any parameter explicitly, for $\lambda>0,$
% and $\xi\in\mathbb{R}^{N}, $
we can make parameters appear by means of replacing the variable $v$ in the equation by $\lambda^{\frac{2}{p-1}}v(\lambda |x|),$ in such a way the problem (\ref{zero-mass-change}) becomes
\begin{equation}\label{zero-mass-change-lambda}
\begin{cases}
\triangle v+\lambda^{-\frac{2p}{p-1}}f(\lambda^{\frac{2}{p-1}}v)=0,\ r\in (0,+\infty);
\\ v>0\  \mbox{and}\ \underset{r\rightarrow \infty}{\lim} v(r)=0.
\end{cases}
\end{equation}
 Then, jointly with the properties of $G^{-1}(v)=v+o(1)$ and $g(G^{-1}(v))=1+o(1)$ as $v\rightarrow 0,$ if $v$ is uniformly bounded, we observe that $\lambda^{-\frac{2p}{p-1}}f(\lambda^{\frac{2}{p-1}}v)\rightarrow v^{p}$ as $\lambda\rightarrow 0.$ Thus the problem may be regarded as small perturbation of the problem
$$
\triangle v+v^{p}=0
$$
when $\lambda>0$ is sufficiently small. Consequently, a positive solution with slow decay $O(|x|^{-\frac{2}{p-1}})$ at infinity can be constructed by asymptotic analysis and Liapunov-Schmidt reduction method.
To be more specific, the idea of the proof of Theorem \ref{result-zero-mass}-(3) is, for $\lambda$ small, to consider the function $\lambda^{\frac{2}{p-1}}w(\lambda |x|)$ as an initial approximation. This scaling will constitute a good approximation under our situations for $\lambda$ sufficiently small. Then, by a classical fixed point argument for contraction mappings, we prove that (\ref{zero-mass-change-lambda}) possesses solutions as desired.
Similar idea has been used in \cite{Pino-Wei-2007,Pino-Wei-2008}.

Under appropriate norms
\begin{equation}\label{fast-norm-1}
\|\phi\|_{*}=\underset{ |x| \leq 1}{\sup} |x|^{\sigma}|\phi(x)|+\underset{ |x|\geq 1}{\sup} |x|^{\frac{2}{p-1}}|\phi(x)|
\end{equation}
and
\begin{equation}\label{fast-norm-2}
\|h\|_{**}=\underset{ |x|\leq 1}{\sup} |x|^{2+\sigma}|h(x)|+\underset{ |x|\geq 1}{\sup} |x|^{2+\frac{2}{p-1}}|h(x)|,
\end{equation}
where $\sigma>0,$ we first consider the solvability of the linear problem
\begin{equation}\label{2-fast-linear-operator}
\begin{cases}\phi ''+\frac{N-1}{r}\phi '+pw^{p-1}\phi=h,\ r\in (0,+\infty);\\
\underset{r\rightarrow +\infty}{\lim}\phi(r)=0
\end{cases}
\end{equation}
and thus we need the following lemma which is Lemma A. 1 proved by D\'avila, del Pino, Musso and Wei \cite{Pino-Wei-2007}.
% which is proved by D\'avila, del Pino, Musso and Wei in \cite{Pino-Wei-2007}.
\begin{lemma}\label{zero-lemma-linear-1}
%Let $|\xi|\leq \Lambda$ and $0<\sigma<N-2.$
%\begin{itemize}
%\item[{\rm (1).}]
Assume $0<\sigma<N-2$ and $p> \frac{N+2}{N-2}.$ Then there exists a constant $C>0$ such that for any $h$ satisfying $\|h\|_{**}<+\infty,$ equation (\ref{2-fast-linear-operator}) has a solution $\phi =\mathcal{T}(h)$ such that $\mathcal{T}$ define a linear map and
$$
\|\phi\|_{*}=\|\mathcal{T}(h)\|_{*}\leq C\|h\|_{**}.
$$
\end{lemma}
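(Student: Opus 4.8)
\emph{Proof sketch.} The plan is to reduce the solvability and estimate for the linear ODE (\ref{2-fast-linear-operator}) to an explicit variation-of-parameters analysis, built on a complete understanding of the two homogeneous solutions of $L\phi := \phi'' + \frac{N-1}{r}\phi' + pw^{p-1}\phi = 0$ and their asymptotics at $r=0$ and $r=+\infty$. Once the homogeneous solutions are in hand, the inhomogeneous problem is solved by an integral formula that is manifestly linear in $h$, so that $\mathcal{T}$ is automatically a linear map, and the bound $\|\phi\|_* \le C\|h\|_{**}$ follows by estimating the resulting integrals against the weighted pointwise bounds on $h$.

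First I would produce one homogeneous solution from the scaling invariance of the limit equation $\Delta w + w^p = 0$. Since $w_\lambda(r) = \lambda^{2/(p-1)}w(\lambda r)$ solves it for every $\lambda>0$, differentiating at $\lambda=1$ shows that $Z_1(r) := \frac{2}{p-1}w(r) + rw'(r)$ satisfies $LZ_1 = 0$. Because $w'(0)=0$ and $w(0)=1$ one has $Z_1(0)=\frac{2}{p-1}\neq 0$, so $Z_1$ is regular at the origin, and writing $w(r)=C_{p,N}r^{-2/(p-1)}(1+o(1))$ the leading terms of $Z_1$ cancel, so $Z_1$ decays strictly faster than $r^{-2/(p-1)}$. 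A second, independent solution $Z_2$ is obtained by reduction of order: the self-adjoint form $(r^{N-1}\phi')' + r^{N-1}pw^{p-1}\phi = 0$ forces the Wronskian identity $r^{N-1}(Z_1 Z_2' - Z_1' Z_2)\equiv\kappa$ for a nonzero constant $\kappa$, and this $Z_2$ is singular at the origin, with $Z_2(r)\sim r^{2-N}$ as $r\to 0$. The behavior at infinity is governed by the tail of the potential: with $\beta:=\frac{2}{p-1}$ a computation gives $C_{p,N}^{p-1}=\beta(N-2-\beta)$, hence $pw^{p-1}\sim \alpha r^{-2}$ with $\alpha=p\beta(N-2-\beta)$, so the homogeneous solutions behave like powers $r^{m_\pm}$, where $m_\pm$ solve $m^2+(N-2)m+\alpha=0$. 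The content of the hypotheses is that the target weights are \emph{admissible}, i.e. non-resonant and strictly between the relevant homogeneous behaviors: at the origin $0<\sigma<N-2$ places $r^{-\sigma}$ strictly between $r^{0}$ and $r^{2-N}$, while at infinity
$$\beta^2-(N-2)\beta+\alpha=\beta(p-1)(N-2-\beta)=2(N-2-\beta)>0$$
whenever $p>\frac{N+2}{N-2}$ (indeed whenever $\beta<N-2$), which says precisely that the decay rate $\frac{2}{p-1}$ is not a resonant exponent of $L$ at infinity.

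With these ingredients I would define the solution operator by
$$\mathcal{T}(h)(r) = \frac{1}{\kappa}\left(Z_1(r)\int_r^\infty Z_2(s)h(s)s^{N-1}\,ds + Z_2(r)\int_0^r Z_1(s)h(s)s^{N-1}\,ds\right),$$
the limits of integration being chosen so that $\lim_{r\to\infty}\phi(r)=0$ and so that the origin behavior stays within the $r^{-\sigma}$ budget. To obtain $\|\mathcal{T}(h)\|_*\le C\|h\|_{**}$ I would split each integral at $r=1$, insert the pointwise bounds $|h(s)|\le\|h\|_{**}s^{-2-\sigma}$ for $s\le1$ and $|h(s)|\le\|h\|_{**}s^{-2-\beta}$ for $s\ge1$, and integrate against the now-known powers of $Z_1,Z_2$; the admissibility inequalities above guarantee that every integral converges and reproduces exactly the weights $|x|^{-\sigma}$ (for $|x|\le1$) and $|x|^{-2/(p-1)}$ (for $|x|\ge1$) defining $\|\cdot\|_*$.

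The main obstacle is the asymptotic analysis at infinity: one must pin down the indicial exponents $m_\pm$ from the exact constant $\alpha=pC_{p,N}^{p-1}$ and verify that both weights $\sigma$ and $\frac{2}{p-1}$ lie strictly between the appropriate pairs of homogeneous exponents at each end, so that no logarithmic (resonant) terms appear and all the variation-of-parameters integrals converge with the claimed rates — and it is here, and only here, that supercriticality $p>\frac{N+2}{N-2}$ is genuinely used. A secondary, bookkeeping-level difficulty is matching the inner ($r\le1$) and outer ($r\ge1$) estimates across $r=1$, where the weight in both norms switches form; this is handled by noting that $Z_1,Z_2$ and their derivatives are comparable to fixed constants near $r=1$, so the two regimes glue together with only a harmless loss in the constant $C$.
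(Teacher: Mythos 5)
The paper itself contains no proof of this lemma: it is quoted verbatim as Lemma A.1 of D\'avila--del Pino--Musso--Wei \cite{Pino-Wei-2007}, so your proposal has to be measured against the ODE argument of that source. Your strategy is the same one: take the regular kernel element $Z_1=\frac{2}{p-1}w+rw'$ coming from scaling, generate a second, singular solution through the Wronskian identity $r^{N-1}(Z_1Z_2'-Z_1'Z_2)\equiv\kappa$, identify the indicial exponents of the operator at $r=0$ and $r=\infty$, check that the weights $\sigma$ and $\beta:=\frac{2}{p-1}$ are non-resonant, and solve by variation of parameters. Your non-resonance computations are correct: $0<\sigma<N-2$ at the origin, and $q(-\beta)=\beta^2-(N-2)\beta+\alpha=2(N-2-\beta)>0$ at infinity, where $q(m)=m^2+(N-2)m+\alpha$ and $\alpha=p\beta(N-2-\beta)$.

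There is, however, a concrete failure in the construction of $\mathcal{T}$: the integral $\int_r^\infty Z_2(s)h(s)s^{N-1}\,ds$ in your formula diverges. Every homogeneous solution decays at infinity no faster than $r^{m_-}$, where $m_\pm$ are the roots of $q$ (or like $r^{-(N-2)/2}$ times an oscillation when those roots are complex, which occurs for all supercritical $p$ below the Joseph--Lundgren exponent --- a case your phrase ``behave like powers $r^{m_\pm}$'' silently excludes). The very computation you perform for $q(-\beta)$, applied instead to the conjugate exponent $\beta-(N-2)$, gives $q(\beta-(N-2))=2(N-2-\beta)>0$ with $\beta-(N-2)$ lying to the left of the vertex $-\frac{N-2}{2}$; hence $\beta-(N-2)<m_-\le m_+$, so for admissible $h$ comparable to $s^{-2-\beta}$ the integrand is of size $s^{m+N-3-\beta}$ with exponent strictly greater than $-1$, and the tail integral is infinite. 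The condition $\phi(r)\to0$ cannot be arranged by basing an integral at $\infty$; one must base both outer integrals at a finite point (say $r=1$), and then the decay $\phi=O(r^{-\beta})$ is automatic because the particular response to a non-resonant forcing $r^{-2-\beta}$ is $q(-\beta)^{-1}r^{-\beta}+O(r^{m_+})$ and every homogeneous correction decays faster than $r^{-\beta}$ (this is exactly where $q(-\beta)>0$ together with $-\beta>-\frac{N-2}{2}$, i.e.\ $m_+<-\beta$, is used). A second, related defect: in the real-root regime, inserting the crude bounds $|Z_1|,|Z_2|\le Cr^{m_+}$ into the variation-of-parameters integrals, as you propose, yields only $r^{m_+-m_--\beta}\gg r^{-\beta}$; one must use the invariance of the formula under $Z_2\mapsto Z_2+cZ_1$ (equivalently, the cancellation of the dominant $r^{m_+}$ components) to land on the claimed weight. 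Both defects are repairable, but as written the key step --- the definition and estimate of $\mathcal{T}$ --- does not go through.
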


%We rewrite (\ref{zero-mass-change-lambda}) as
%$$
%\begin{cases}
%\triangle v+\lambda^{-\frac{2p}{p-1}}f(\lambda^{\frac{2}{p-1}}v)=0,\ r\in (0,+\infty);
%\\ v>0\  \mbox{and}\ \underset{r\rightarrow \infty}{\lim} v(r)=0.
%\end{cases}
%$$
Let us look for a solution to (\ref{zero-mass-change-lambda}) of the form $v=w+\phi,$ which yields the following equation for $\phi=\phi(r)$
\begin{equation}\label{zero-solve-linear-operator-1}
\begin{cases}\triangle \phi+pw^{p-1}\phi=S(w)+N(\phi),\ r\in (0,+\infty);\\
\underset{r\rightarrow +\infty}{\lim}\phi(r)=0,
\end{cases}
\end{equation}
where
$$S(w)=-\triangle w-\lambda^{-\frac{2p}{p-1}}f(\lambda^{\frac{2}{p-1}}w)$$
and
$$N(\phi)=\lambda^{-\frac{2p}{p-1}}f(\lambda^{\frac{2}{p-1}}w)+pw^{p-1}\phi -\lambda^{-\frac{2p}{p-1}}f(\lambda^{\frac{2}{p-1}}(w+\phi)).
$$

We first estimate the error $\|S(w)\|_{**}$ of the approximate solution.
The fact
\begin{align*}
|S(w)|&
%=|\triangle w +\lambda^{-\frac{2p}{p-1}}f(\lambda^{\frac{2}{p-1}}w)|\\&
%=| \triangle w +w^{p}+\lambda^{-\frac{2p}{p-1}}f(\lambda^{\frac{2}{p-1}}w)-w^{p} |\\&
=| \lambda^{-\frac{2p}{p-1}}f(\lambda^{\frac{2}{p-1}}w)-w^{p} |
\end{align*}
and the properties of the change of variables (\ref{Shen-replacement}) show that, for $C_{p}>0,$
\begin{align*}
S(w)=C_{p}\lambda^{\frac{4}{p-1}}w^{p+2}+o\left(\lambda^{\frac{4}{p-1}}w^{p+2}\right)\ \mbox{as}\ \lambda\rightarrow 0.
\end{align*}
Thus, it follows that
$$
| S(w) | \leq C\lambda^{\frac{4}{p-1}} |w|^{p+2}.
$$
 %noticing that $|\phi(x)|\leq |x|^{-\sigma}\|\phi\|_{*,0}$ for $|x|\leq 1,$
We then conclude
\begin{equation}\label{N-2-estimate-1}
\begin{split}
\underset{|x|\leq 1}{\sup}|x |^{2+\sigma} | S(w) |  &\leq C\lambda^{\frac{4}{p-1}}\|w\|_{\infty}^{p+2}\underset{|x |\leq 1}{\sup}|x |^{2+\sigma}
\leq C\lambda^{\frac{4}{p-1}}.
\end{split}
\end{equation}
On the other hand, recalling that
$w(x)\leq C (1+ |x|)^{-\frac{2}{p-1}}\ \mbox{for}\ x\in \mathbb{R}^{N},$
we obtain
\begin{equation}\label{2-N-2-estimate-2}
\begin{split}
\underset{|x|\geq 1}{\sup}|x|^{2+\frac{2}{p-1}} | S(w) |  &\leq C\lambda^{\frac{4}{p-1}}\underset{|x|\geq 1}{\sup}\left(\frac{|x|}{1+|x|}\right)^{\frac{2(p+2)}{p-1}}
\leq C\lambda^{\frac{4}{p-1}}.
\end{split}
\end{equation}
From (\ref{N-2-estimate-1}) and (\ref{2-N-2-estimate-2}), we have
\begin{equation}\label{2-N-2-estimate-3}
\|  S(w)  \|_{**}\leq C\lambda^{\frac{4}{p-1}}.
\end{equation}

In what follows, the proof relies on the contraction mapping theorem. We observe that $\phi$ solves (\ref{zero-solve-linear-operator-1}) if and only if $\phi$ is a fixed point for the operator
$$
\phi=\mathcal{T}(S(w)+N(\phi)),
$$
where $\mathcal{T}$ is introduced in Lemma \ref{zero-lemma-linear-1}. That is to say, $\phi$ solves (\ref{zero-solve-linear-operator-1}) if and only if $\phi$ is a fixed point for the operator
$$
\mathcal{A}(\phi):=\mathcal{T}(S(w)+N(\phi)).
$$
We define
$$
\Sigma =\left\{\phi : \mathbb{R}^{N}\rightarrow \mathbb{R}\  \big |\  \| \phi \|_{*}\leq  C\lambda^{\frac{4}{p-1}}\right\}
$$
and we will prove that $\mathcal{A}$ has a fixed point in $\Sigma.$

For any $\phi\in \Sigma$ and $\sigma\in \left(0,\min\left\{2,\frac{2}{p-1}\right\}\right),$ according to the arguments given in \cite{Pino-Wei-2007}, we have
\begin{equation}\label{zero-mass-N-estimate}
\|N(\phi)\|_{**}\leq C [\|\phi\|_{*}^{2}+\|\phi\|_{*}^{p}]
\end{equation}
 since
$$
N(\phi)=w^p+pw^{p-1}\phi-(w+\phi)^{p}+o(1)\ \mbox{as}\ \lambda \rightarrow 0.
$$
%Indeed, combining the fact
%$$
%N(\phi)=w^p+pw^{p-1}\phi-(w+\phi)^{p}+o(1)\ \mbox{as}\ \lambda \rightarrow 0,
%$$
%we observe that
%$$
%|N(\phi)|\leq C(w^{p-2}|\phi|^2+|\phi|^{p}).
%$$
%Thus, on the one hand,
%\begin{equation}\label{zero-mass-N-estimate-1}
%\begin{split}
%\underset{|x|\leq 1}{\sup} |x |^{2+\sigma}|N(\phi)|&\leq C \underset{|x|\leq 1}{\sup} |x|^{2-\sigma}\|\phi\|_{*}^{2}+
%C \underset{|x|\leq 1}{\sup} |x|^{2-(p-1)\sigma}\|\phi\|_{*}^{p}\\&
%\leq C[\|\phi\|_{*}^{2}+\|\phi\|_{*}^{p}].
%\end{split}
%\end{equation}
%On the other hand, recalling that
%$$|\phi(x)|\leq C |x|^{-\frac{2}{p-1}}\|\phi\|_{*}\ \mbox{for}\  |x|\geq 1$$
%and
%$$w(x)\leq C (1+ |x|)^{-\frac{2}{p-1}}\ \mbox{for}\ x\in \mathbb{R}^{N},$$
%we obtain
%\begin{equation}\label{zero-mass-N-estimate-2}
%\begin{split}
%\underset{|x|\geq 1}{\sup}|x|^{2+\frac{2}{p-1}}|N(\phi)|&\leq C \underset{|x|\geq 1}{\sup}\left(\frac{|x|}{1+|x|}\right)^{\frac{2(p-2)}{p-1}} \|\phi\|_{*}^{2}+C \|\phi\|^{p}_{*}\\&
%\leq C [\|\phi\|_{*}^{2}+\|\phi\|_{*}^{p}].
%\end{split}
%\end{equation}
%From (\ref{zero-mass-N-estimate-1}) and (\ref{zero-mass-N-estimate-2}), we have
%\begin{equation}\label{zero-mass-N-estimate-3}
%\|N(\phi)\|_{**}\leq C [\|\phi\|_{*}^{2}+\|\phi\|_{*}^{p}],
%\end{equation}
%which gives the claim (\ref{zero-mass-N-estimate}).
Therefore, combining (\ref{2-N-2-estimate-3}), (\ref{zero-mass-N-estimate}) and Lemma \ref{zero-lemma-linear-1}, it follows that
\begin{equation}\label{zero-mass-in}
\begin{split}
\|\mathcal{A}(\phi)\|_{*} &\leq C [\|S(w)\|_{**}+\|N(\phi)\|_{**}]\\&
\leq C[\lambda^{\frac{4}{p-1}} +\lambda^{\frac{8}{p-1}}+\lambda^{\frac{4p}{p-1}} ]
\leq C \lambda^{\frac{4}{p-1}},
\end{split}
\end{equation}
which implies that $\mathcal{A}(\Sigma)\subset \Sigma.$

We still have to prove that $\mathcal{A}$ is a contraction mapping in $\Sigma.$ Let us take $\phi_{1},\phi_{2}\in \Sigma.$ Then we have
\begin{equation}\label{zero-mass-contraction}
\|\mathcal{A}(\phi_{1})-\mathcal{A}(\phi_{2})\|_{*}\leq C\|N({\phi_{1}})-N(\phi_{2})\|_{**}.
\end{equation}
Moreover, noting that
$$
|N(\phi_{1})-N(\phi_{2})|\leq C\left(w^{p-2}(|\phi_{1}|+|\phi_{2}|)+|\phi_{1}|^{p-1}+|\phi_{2}|^{p-1}\right)|\phi_{1}-\phi_{2}|,
$$
we have the estimate
\begin{align*}
\|\mathcal{A}(\phi_{1})-\mathcal{A}(\phi_{2})\|_{*}& \leq C\|N({\phi_{1}})-N(\phi_{2})\|_{**}\\&
\leq C\left[ \|\phi_{1}\|_{*}^{\min\left\{1,p-1\right\}} +\|\phi_{2}\|_{*}^{\min\left\{1,p-1\right\}}\right] \|\phi_{1}-\phi_{2}\|_{*}\\&
\leq \frac{1}{2}\|\phi_{1}-\phi_{2}\|_{*}
\end{align*}
for suitable small $\lambda.$ This means that $\mathcal{A}$ is a contraction mapping from $\Sigma$ into itself, and hence a fixed point $\phi$ in this region indeed exists. %Since $\phi$ is a solution of (\ref{zero-solve-linear-operator-1}),
%then the function $v:=w+\phi$ solves
%\begin{align*}
%\begin{cases}
%\triangle v+\lambda^{-\frac{2p}{p-1}} f(\lambda^{\frac{2}{p-1}}v) =0,\ x\in\mathbb{R}^{N};\\
%\underset{|x|\rightarrow +\infty}{\lim}v(x)=0
%\end{cases}
%\end{align*}
%and $v(x)$ is bounded on $\partial B_{1}=\left\{x: |x|=1\right\}$ because $|\phi(x)|\leq C$ if $|x|=1.$ Then we achieve the uniform upper bound of $v$ by embedding theorems if we can prove $\|v^{p}\|_{L^{q}(B_{1})}$ is bounded as $\lambda\rightarrow 0$ for $q>\frac{N}{2}.$ In fact, if $\sigma$ is small, we have
%$$
%\int_{\overline{B}_{1}}v^{pq}(x)\mbox{d}x\leq C\int_{\overline{B}_{1}}\left(w^{pq}(x)+|\phi(x)|^{pq}\right)\mbox{d}x\leq C+\int_{\overline{B}_{1}}|x|^{-\sigma pq}\mbox{d}x\leq C.
%$$
%Hence, $|v(x)|\leq C$ for $|x|\leq 1.$ To go a step further, we finally conclude that
%\begin{equation}\label{fast-bound-phi-case-1}
%|\phi(x)|\leq C\ \mbox{for}\ x\in\mathbb{R}^{N}.
%\end{equation}
So the function $v_{\lambda}(|x|):=\lambda^{\frac{2}{p-1}}(w(\lambda |x|)+\phi(\lambda |x|))$ is a continuum solutions of (\ref{zero-solve-linear-operator-1}) satisfying $\underset{\lambda\rightarrow 0}{\lim} v_{\lambda}(|x|)=0$ uniformly in $\mathbb{R}^{N}$ and $u_{\lambda}(|x|)=G^{-1}(v_{\lambda}(|x|))$ is our desired solution.
This complete the proof of Theorem \ref{result-zero-mass}.

\section{Proof of Theorem \ref{result-2}}

In this section, we will construct a fast decaying solution to the problem (\ref{WZQ-equation}) when $\frac{N+2}{N-2}<p<\frac{3N+2}{N-2}$ by the reduction method. The idea of the proof of Theorem \ref{result-2} is, for $\xi\in\mathbb{R}^{N}$ and $\varepsilon$ small, to consider the function $v_{f}(x+\xi)$ as an initial approximation, where $v_{f}(x)$ is the unique positive radial solution of the zero mass problem (\ref{zero-mass}) stated in Theorem \ref{result-zero-mass}. These functions will constitute good approximations under our situations for suitable $\xi\in\mathbb{R}^{N}$ and $\varepsilon$ sufficiently small. Then, by adjusting $\xi,$ we prove that (\ref{WZQ-equation}) possesses a solution as desired.

% after the addition of a lower order term.

%\subsection{The operator $\triangle+\lambda^{-2}f'(\lambda^{\frac{2}{p-1}}v_{0})$ in $\mathbb{R}^{N}$}

At the beginning, we state some notations which will be used in the following. We consider the initial value problem
\begin{equation}\label{fast-initial-problem}
\begin{cases}v''+\frac{N-1}{r}v'+f(v)=0,\ r\in (0,+\infty);\\
v(0)=d>0,\ v'(0)=0,
\end{cases}
\end{equation}
where $f(s)=\frac{G^{-1}(s)^{p}}{g(G^{-1}(s))}.$
By Theorem \ref{result-zero-mass}, there exists a unique $d^{*}>0$ such that the corresponding solution $v_{f}(r;d^{*})$ is the unique positive fast decay solution. Moreover, $z_{0}(r):=\frac{\partial v_{f}}{\partial d}(r;d^{*}) $ satisfies the following initial value problem
\begin{equation}\label{fast-initial-linear-problem}
\begin{cases}\phi''+\frac{N-1}{r}\phi'+f'(v_{f})\phi=0,\ r\in (0,+\infty);\\
\phi(0)=1>0,\ \phi'(0)=0.
\end{cases}
\end{equation}
Then by Lemma 4.4 in \cite{Adachi-2016}, we have that $v_{f}$ is non-degenerate in $D_r^{1,2}(\mathbb{R}^{N})$--radial functions in $ D^{1,2}$. Our next lemma shows that it is nondegenerate in the class of bounded functions. 
Let $Z_{i}=\frac{\partial v_{f}}{\partial x_{i}}$ for $1\leq i\leq N.$  Then we  have the following result.

\begin{lemma}\label{orthogonality}
If $\phi$ satisfies $|\phi|\leq C$ and
\begin{equation}\label{phi-orthogonality}
%\begin{cases}
\triangle \phi+f'(v_{f})\phi=0,\ x\in\mathbb{R}^{N},
%\end{cases}
\end{equation}
then $\phi\in W=\mbox{Span}\left\{Z_{1},Z_{2},\cdots,Z_{n}\right\}.$
\end{lemma}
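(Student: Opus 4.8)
The plan is to diagonalize the linearized operator $L:=\triangle+f'(v_f)$ by expanding a bounded solution $\phi$ in spherical harmonics and reducing the statement to a family of scalar ODEs. Writing $\phi(r\theta)=\sum_{k\geq 0}\phi_k(r)\Theta_k(\theta)$, where the $\Theta_k$ are the eigenfunctions of $-\triangle_{S^{N-1}}$ with eigenvalues $\mu_k=k(k+N-2)$ (so $\mu_0=0$, and $\mu_1=N-1$ with eigenspace spanned by the $x_i/|x|$), each coefficient satisfies
\begin{equation*}
\phi_k''+\frac{N-1}{r}\phi_k'+\Big(f'(v_f)-\frac{\mu_k}{r^2}\Big)\phi_k=0,\qquad \|\phi_k\|_\infty\leq C.
\end{equation*}
Since $v_f=O(r^{2-N})$ and $p>\frac{N+2}{N-2}$, the potential $f'(v_f)\sim p\,v_f^{p-1}$ decays strictly faster than $r^{-2}$, so at infinity the indicial exponents of each mode are $k$ and $2-N-k$, while at the origin they are $k$ and $2-N-k$ as well. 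Boundedness therefore forces $\phi_k$ to coincide, up to scale, with the solution regular at the origin, and for $k\geq 1$ it must in fact decay like $r^{2-N-k}$.

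For the translation mode $k=1$, differentiating $\triangle v_f+f(v_f)=0$ in $x_i$ shows that the radial profile $v_f'$ solves the $k=1$ equation; its second independent solution is singular at the origin, hence unbounded, so the bounded $k=1$ part of $\phi$ is a combination of the $Z_i=v_f'\,x_i/|x|$ and lies in $W$. For the higher modes $k\geq 2$ I would exploit that $\psi:=-v_f'>0$ on $(0,\infty)$ (because $v_f$ is radially decreasing by Proposition \ref{Prop-B-L}) solves the $k=1$ equation. Setting $\phi_k=\psi\,q$ and combining the two ODEs yields the exact identity
\begin{equation*}
\big(r^{N-1}\psi^2 q'\big)'=(\mu_k-\mu_1)\,r^{N-3}\psi^2 q .
\end{equation*}
Multiplying by $q$, integrating over $(0,\infty)$, and checking that the boundary terms vanish (from $\phi_k\sim r^{k}$, $\psi\sim c_0 r$ near $0$, and $\phi_k\sim r^{2-N-k}$, $\psi\sim r^{1-N}$ at infinity) gives
\begin{equation*}
\int_0^\infty r^{N-1}\psi^2 (q')^2\,dr=-(\mu_k-\mu_1)\int_0^\infty r^{N-3}\psi^2 q^2\,dr .
\end{equation*}
Since $\mu_k>\mu_1$ for $k\geq 2$, the left side is nonnegative and the right side nonpositive, whence $q\equiv 0$ and $\phi_k\equiv 0$. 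This disposes of all modes $k\geq 2$.

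The delicate case is the radial mode $k=0$, where $\mu_0=0<\mu_1$ makes the comparison identity inconclusive and where \emph{both} indicial exponents $0$ and $2-N$ at infinity are bounded; thus a bounded radial solution $\phi_0$ a priori behaves like $a+b\,r^{2-N}$. The plan is to reduce this to the radial non-degeneracy already recorded, namely that $v_f$ has trivial kernel in $D^{1,2}_r(\mathbb{R}^N)$ (Lemma 4.4 of \cite{Adachi-2016}): if one can show that every bounded radial solution in fact has $a=0$, i.e. decays like $r^{2-N}$ and hence belongs to $D^{1,2}_r$, then Lemma 4.4 forces $\phi_0\equiv 0$, and assembling the mode analysis gives $\phi\in\mathrm{Span}\{Z_1,\dots,Z_N\}$. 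Establishing the absence of the constant component $a$ is the main obstacle: the potential is short range, so both asymptotics are a priori admissible, and ruling out the constant mode cannot be done by the soft spherical-harmonic comparison used for $k\geq 2$. Instead I expect to need a quantitative argument, for instance a Wronskian/Pohozaev-type identity with $v_f$ relating $a$ to the convergent integral $\int_0^\infty r^{N-1}\big(v_f f'(v_f)-f(v_f)\big)\phi_0\,dr$, together with the precise decay $v_f f'(v_f)-f(v_f)\sim (p-1)v_f^{p}$ coming from the change of variables; the sharp point is to convert this relation into $a=0$ so that the $D^{1,2}_r$ non-degeneracy can be invoked.
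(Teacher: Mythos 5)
Your treatment of the modes $k=1$ and $k\ge 2$ is correct and in fact more self-contained than the paper's (which, for $k\ge 2$, simply quotes Lemma A.3 of \cite{Pino-Wei-2007} and the maximum principle); the factorization $\phi_k=\psi q$ with $\psi=-v_f'>0$ and the resulting integral identity are fine once the boundary terms are checked as you indicate. The genuine gap is exactly where you flag it: the radial mode $k=0$ is never closed. You end with a ``plan'' whose decisive step --- showing that the constant component $a$ in the asymptotics $\phi_0\sim a+b\,r^{2-N}$ vanishes --- is left as something you ``expect to need,'' with no identity actually derived. As written, your argument does not exclude a bounded radial kernel element tending to a nonzero constant, so it does not prove the lemma.

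The paper closes this case with precisely the ingredient you are missing: Lemma 4.2 of \cite{Adachi-2016}, which gives the asymptotic behaviour of the regular solution $z_0=\frac{\partial v_f}{\partial d}(\cdot\,;d^*)$ of the mode-$0$ linearized ODE, namely $r^{\lambda^*}z_0(r)\to-\infty$ with $\lambda^*=\frac{N-1}{2}$ (and $\lambda^*=\frac12$ for $N=3$); in other words, $z_0$ does \emph{not} decay at infinity. Since any $\phi_0$ that is bounded near the origin must be a multiple of $z_0$, and since the preliminary step of the paper's proof gives $\phi=O(|x|^{2-N})$, the multiple must be zero. This is an external transversality/asymptotic fact about the specific fast-decay solution; it cannot be produced by the soft comparison argument that disposes of $k\ge 2$, and your proposed reduction to the $D^{1,2}_r$ non-degeneracy (Lemma 4.4 of \cite{Adachi-2016}) still presupposes it, because you must first know $a=0$ to place $\phi_0$ in $D^{1,2}_r$. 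I would add that your worry about the constant component is well founded on another count: the paper's assertion that boundedness yields $\phi=O(|x|^{2-N})$ ``by bootstrapping'' tacitly uses $\phi(x)\to 0$ as $|x|\to\infty$ (a bounded solution of $\triangle\phi=O(|x|^{-m})$ with $m>2$ only tends to \emph{some} constant). This is harmless in the application inside Lemma \ref{fast-lemma-linear-1}, where $\phi$ carries the weight $\langle x-\xi\rangle^{-\sigma}$ and hence decays, but it confirms that the decisive fact for mode $0$ is the non-decay of $z_0$, which your proof must import explicitly to be complete.
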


\begin{proof}
If $\phi$ is bounded and satisfies (\ref{phi-orthogonality}), by bootstrapping, we achieve $\phi(x)=O(|x|^{2-N})$ as $|x|\rightarrow +\infty.$ Expanding $\phi$ as
$$
\phi(x)=\sum_{k=0}^{\infty}\phi_{k}(r)\Theta_{k}(\vartheta)
$$
we see that $\phi_{k}$ is a solution of
 \begin{equation}\label{fast-linear-ode-2}
\phi_{k}''+\frac{N-1}{r}\phi_{k}'+\left(f'(v_{f})-\frac{\lambda_{k}}{r^2}\right)\phi_{k}=0\ \mbox{for}\ \mbox{all}\ r>0\ \mbox{and}\ k\geq 0.
\end{equation}
%\begin{itemize}
For mode $0,$ noticing that $\lambda_{0}=0,$ we know $\phi_{0}(r)$ is a solution of (\ref{fast-linear-ode-2}) and, by
Lemma 4.2 in \cite{Adachi-2016}, $\phi_{0}(r)$ satisfies
$$
r^{\lambda^*} \phi_{0}(r)\rightarrow -\infty\ \mbox{as}\ r\rightarrow \infty,
$$
where $\lambda^*=\begin{cases}\frac{N-1}{2}\ \mbox{if}\ N\geq 4;\\
\frac{1}{2}\ \ \ \ \ \mbox{if}\ N=3.\end{cases}$
Thus, if $\phi_{0}(r)\in D_{rad}^{1,2}(\mathbb{R}^{N}),$ we conclude that
$$
r^{\lambda^*}z_{0}(r)=\begin{cases}
O(r^{\frac{3-N}{2}}),\ \mbox{if}\ N\geq 4;\\
O(r^{-\frac{1}{2}}),\ \mbox{if}\ N=3
\end{cases}
$$
which is a contradiction.
%\end{itemize}
For mode $k$ with $k>1,$ according to Lemma A. 3 in \cite{Pino-Wei-2007}, we conclude that the solution $\phi_{k}$ to (\ref{fast-linear-ode-2}) is zero by the maximum principle. Consequently, jointly with the nondegeneracy in radial class,
we have
$$\phi=\phi_{1}\in\mbox{Span}\left\{Z_{1},Z_{2},\cdots,Z_{N}\right\}.$$\end{proof}

% and
%$$W={\rm Span}\left\{Z_{1}, Z_{2},\cdots,Z_{N} \right\}.$$
% and $Z_{i}=\eta \frac{\partial v_{0}}{\partial x_{i}},$ $i=1,2,\cdots,N,$ where $\eta\in C_{0}^{\infty}(\mathbb{R}^{N})$ satisfies $0\leq \eta\leq 1.$ Moreover, $\eta(x)=1$ if $|x|\leq R_{0}$ and $\eta(x)=0$ if $|x|\geq R_{0}+1$ for a fixed number $R_{0}>0$ large enough.

We introduce  appropriate norms
\begin{equation}\label{fast-norm-1}
\|\phi\|_{*,\xi}=\underset{ x\in \mathbb{R}^{N}}{\sup}<x-\xi>^{\sigma}|\phi(x)|
\end{equation}
and
\begin{equation}\label{fast-norm-2}
\|h\|_{**,\xi}=\underset{x\in\mathbb{R}^{N}}{\sup} <x-\xi>^{2+\sigma} |h(x)|,
\end{equation}
where $<\cdot>:=\left(1+|\cdot |^{2}\right)^{\frac{1}{2}}$ and $0<\sigma< N-2.$
We first solve the linear problem
\begin{equation}\label{fast-linear-operator}
\begin{cases}\triangle \phi+f'(v_{f})\phi=h+\sum_{i=1}^{N}c_{i}f'(v_{f})Z_{i},\ x\in\mathbb{R}^{N};\\
\int_{\mathbb{R}^{N}}f'(v_{f})\phi Z_{i}=0,\ i=1,2,\cdots,N;\\
 \underset{|x|\rightarrow +\infty}{\lim}\phi(x)=0.
\end{cases}
\end{equation}

\begin{lemma}\label{fast-lemma-linear-1}
Let $\Lambda >0$ and $|\xi|\leq \Lambda.$ Assume $\frac{N+2}{N-2}<p<\frac{3N+2}{N-2}$ and $\sigma<N-2.$ Then there is a linear map $(\phi,c_{1},\cdots,c_{N})=\mathcal{T}(h)$ defined whenever $\|h\|_{**,\xi}<\infty$ such that $(\phi,c_{1},\cdots,c_{N})$ satisfies (\ref{fast-linear-operator}) and
\begin{equation}\label{fast-linear-norm-up}
\|\phi\|_{*,\xi}+\sum_{i=1}^{N}|c_{i}|\leq C\|h\|_{**,\xi}.
\end{equation}
Moreover, $c_{i}=0$ for all $1 \leq i\leq N$ if and only if
\begin{equation}\label{fast-linear-operator-condition}
%\int_{\mathbb{R}}h z_{0}=0\ \mbox{and}\
\int_{\mathbb{R}}h\frac{\partial v_{f}}{\partial x_{i}}=0\ \mbox{for}\ 1\leq i\leq N.
\end{equation}
\end{lemma}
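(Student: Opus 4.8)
The plan is to establish solvability of the linear problem (\ref{fast-linear-operator}) as a Fredholm-type statement, with the orthogonality conditions serving to kill the kernel identified in Lemma \ref{orthogonality}. The operator $L\phi = \triangle\phi + f'(v_f)\phi$ is, up to the decaying zero-order coefficient $f'(v_f)$, a compact perturbation of the Laplacian on the weighted spaces encoded by $\|\cdot\|_{*,\xi}$ and $\|\cdot\|_{**,\xi}$. By Lemma \ref{orthogonality}, the bounded kernel of $L$ is exactly $W = \mbox{Span}\{Z_1,\ldots,Z_N\}$. The role of the right-hand side $\sum_i c_i f'(v_f) Z_i$ is to supplement $h$ so that the equation becomes solvable even when $h$ itself is not orthogonal to the cokernel; the constants $c_i$ are the Lagrange multipliers, and the orthogonality constraints $\int f'(v_f)\phi Z_i = 0$ fix the otherwise-free additive kernel component of $\phi$.

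First I would set up the a priori estimate (\ref{fast-linear-norm-up}) by a contradiction-and-blow-up argument in the spirit of the del Pino--Musso--Wei framework. Suppose there were sequences $h_n$ with $\|h_n\|_{**,\xi}\to 0$ but solutions $(\phi_n, c_{i,n})$ with $\|\phi_n\|_{*,\xi} + \sum_i|c_{i,n}| = 1$. First one tests the equation against each $Z_j$, using $\int f'(v_f)\phi_n Z_j = 0$ and the near-orthogonality $\int f'(v_f) Z_i Z_j \approx \delta_{ij}\|Z_i\|^2$ (which follows from the radial symmetry of $v_f$, so the $Z_i = \partial_{x_i} v_f$ are mutually $L^2(f'(v_f)\,dx)$-orthogonal), to conclude $c_{i,n}\to 0$. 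One then has $\|L\phi_n\|_{**,\xi}\to 0$ with $\|\phi_n\|_{*,\xi}\le 1$, so by elliptic estimates $\phi_n$ converges (on compacta, after passing to a subsequence) to a bounded limit $\phi_\infty$ solving $L\phi_\infty = 0$; by Lemma \ref{orthogonality}, $\phi_\infty\in W$, while the passage to the limit in the constraints forces $\int f'(v_f)\phi_\infty Z_i = 0$, whence $\phi_\infty = 0$. The contradiction must be completed by ruling out loss of mass at infinity: the $\|\cdot\|_{*,\xi}$ norm carries weight $\langle x-\xi\rangle^{\sigma}$ with $0<\sigma<N-2$, and one compares $|\phi_n|$ against the barrier $\langle x-\xi\rangle^{-\sigma}$, which $L$ maps (to leading order, since $f'(v_f)=O(|x|^{-2-\sigma'})$ decays faster than $|x|^{-2}$) into something controlled by $\|h_n\|_{**,\xi}$. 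This gives $\|\phi_n\|_{*,\xi}\to 0$, contradicting the normalization.

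With the a priori bound in hand, existence of $(\phi, c_1,\ldots,c_N) = \mathcal{T}(h)$ and its linearity follow from a standard functional-analytic argument: one may, for instance, work in the Hilbert space $D^{1,2}_r$-type setting augmented to enforce the $N$ orthogonality conditions, solve the variational problem by Lax--Milgram or Fredholm alternative in the subspace $W^\perp$ (where $L$ is invertible by the nondegeneracy), and read off the $c_i$ as the projection onto $W$. The weighted bound (\ref{fast-linear-norm-up}) then upgrades the $D^{1,2}$ solution to the pointwise-decay norm via the barrier argument again. Finally, the characterization of when all $c_i$ vanish is obtained by testing (\ref{fast-linear-operator}) against $Z_j = \partial_{x_j} v_f$: since $L Z_j = 0$ (as $Z_j$ solves the linearized equation, being a translation mode), integration by parts gives $\int h\, Z_j = -\sum_i c_i \int f'(v_f) Z_i Z_j$, and the near-diagonal invertibility of the Gram matrix $\big(\int f'(v_f) Z_i Z_j\big)$ shows $c_i = 0$ for all $i$ precisely when $\int h\, Z_j = 0$ for all $j$, which is (\ref{fast-linear-operator-condition}).

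The \textbf{main obstacle} I anticipate is the control at infinity in the a priori estimate, i.e. justifying that no mass escapes and that the pointwise weighted norm is genuinely controlled rather than merely the $D^{1,2}$ energy. This hinges delicately on the decay rate of $f'(v_f)$: since $v_f$ is the \emph{fast}-decaying solution, $v_f = O(|x|^{2-N})$ and $f'(v_f)$ decays at least like $|x|^{-2(N-2)\cdot\frac{?}{}}$, comfortably faster than the critical $|x|^{-2}$, so the Laplacian genuinely dominates the zero-order term far out and the comparison barrier $\langle x-\xi\rangle^{-\sigma}$ is admissible exactly when $\sigma < N-2$; the boundary of the hypothesis $\sigma < N-2$ is where this breaks, which is why that restriction appears. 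The secondary technical point is verifying the uniformity of all constants in $\xi$ for $|\xi|\le\Lambda$, which follows because the problem is a translate of the $\xi=0$ case and the weights translate accordingly, but it must be tracked carefully so that the later gluing in the proof of Theorem \ref{result-2} is not hostage to a $\xi$-dependent constant.
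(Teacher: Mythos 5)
Your proposal is correct and follows essentially the same route as the paper: an a priori estimate by contradiction and compactness, with the bounded kernel killed by Lemma \ref{orthogonality} together with the orthogonality constraints, then existence via the Fredholm alternative on the orthogonal complement, and the characterization of $c_{i}=0$ by testing against the translation modes $Z_{j}$. The only divergence is in ruling out loss of mass at infinity: you use a supersolution barrier $\langle x-\xi\rangle^{-\sigma}$ (admissible since $\sigma<N-2$ and $f'(v_{f})$ decays faster than $|x|^{-2}$), whereas the paper rescales around the points $x_{n}\to\infty$ where the weighted norm concentrates and kills the resulting harmonic limit by the maximum principle; the two devices are standard and interchangeable here.
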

\begin{proof}
%We will prove the lemma by taking $\xi=0$ and our proof is mainly based on the arguments in \cite{Pino-Wei-2007,Pino-Wei-2008}.
We will divide the proof into two steps.

{\bf Step 1. A priori estimate}

By taking $h=h^{(1)}+h^{(2)}$ in (\ref{fast-linear-operator}), where $h^{(1)}\in W_{1}=\left\{f'(v_{f})Z_{1},f'(v_{f})Z_{2},\cdots,f'(v_{f})Z_{N}\right\}$ and $h^{(2)}\in W_{1}^{\bot},$ we have
\begin{equation}\label{fast-linear-split}
\triangle \phi+f'(v_{f})\phi=h^{(1)}+h^{(2)}+\sum_{i=1}^{N}c_{i}f'(v_{f})Z_{i}.
\end{equation}
If we take $h^{(1)}=-\sum_{i=1}^{N}c_{i}f'(v_{f})Z_{i},$ that is,
\begin{equation}\label{c-i-value}
%c_{0}=-\frac{\int_{\mathbb{R}^{N}}h^{(1)}z_{0}}{\int_{\mathbb{R}^{N}}|z_{0}|^2}\ \mbox{and}
\ c_{i}=-\frac{\int_{\mathbb{R}^{N}}h^{(1)}Z_{i}}{\int_{\mathbb{R}^{N}}f'(v_{f})|Z_{i}|^2}\ \mbox{for}\ i=1,2,\cdots,N,
\end{equation}
it follows from (\ref{fast-linear-split}) that
\begin{equation}\label{fast-linear-split-1}
\triangle \phi+f'(v_{f})\phi=h^{(2)}
\end{equation}
and $c_{i}=0$ for all $1\leq i\leq N$ if and only if
$$
%\int_{\mathbb{R}}h z_{0}=0\ \mbox{and}\
\int_{\mathbb{R}}h\frac{\partial v_{f}}{\partial x_{i}}=0\ \mbox{for}\ 1\leq i\leq N.
$$
So, in what follows, we consider
\begin{equation}\label{fast-linear-operaror-1}
\begin{cases}
\triangle \phi+f'(v_{f})\phi=h^{(2)},\ x\in\mathbb{R}^{N};\\
\int_{\mathbb{R}^{N}}f'(v_{f})\phi Z_{i}=0,\ i=1,2,\cdots,N;\\
\underset{|x|\rightarrow +\infty}{\lim}\phi(x)=0.
\end{cases}
\end{equation}

We first prove the priori estimates (\ref{fast-linear-norm-up}) by using the contradiction argument. Suppose that there exist $\phi_{n},$ $h^{(2)}_{n}$ such that $\|\phi_{n}\|_{*,\xi}=1$ and $\|h^{(2)}_{n}\|_{**,\xi}=o(1)$ as $n\rightarrow +\infty.$
By the definition of $\|\phi_{n}\|_{*,\xi},$ we can take $x_{n}\in\mathbb{R}^{N}$ with the property
\begin{equation}\label{x-j-norm}
<x_{n}-\xi>^{\sigma}  |\phi_{n}(x_{n}) |\geq \frac{1}{2}.
\end{equation}
Then, we again have to distinguish two possibilities. Along a subsequence, it follows that $x_{n}\rightarrow x_{0}\in\mathbb{R}^{N}$ or $|x_{n}|\rightarrow +\infty.$

If $x_{n}\rightarrow x_{0},$ standard elliptic estimates show that $\phi_{n}\rightarrow \phi$ uniformly on compact sets of $\mathbb{R}^{N}.$ Moreover, $\phi$ is a solution to (\ref{fast-linear-operaror-1}) with $h^{(2)}=0$ satisfying
\begin{equation}\label{x-j-norm-limit}
<x_{0}-\xi>^{\sigma} |\phi(x_{0})|\geq \frac{1}{2}
\end{equation}
and $|\phi (x)|<+\infty.$
Thus Lemma \ref{orthogonality} shows that
$$\phi=\phi_{1}\in\mbox{Span}\left\{Z_{1},Z_{2},\cdots,Z_{N}\right\}.$$ Then the facts $\int_{\mathbb{R}^{N}}\nabla \phi\cdot \nabla Z_{i}=0$ for $i=1,2,\cdots,N$ show that $\nabla\phi=0.$ We achieve a contradiction to (\ref{x-j-norm-limit}) since $\underset{|x|\rightarrow +\infty}{\lim}\phi(x)=0.$

If $x_{n}\rightarrow +\infty,$ We consider $\tilde{\phi}_{n}(y)=|x_{n}|^{\sigma}\phi_{n}(|x_{n}|y+x_{n}+\xi)$ and observe that $\tilde{\phi}_{n}$ satisfies
$$
\triangle \tilde{\phi}_{n}+|x_{n}|^{2}f'(v_{f,n})\tilde{\phi}_{n}=\tilde{h}^{(2)}_{n},\ x\in\mathbb{R}^{N},
$$
where $v_{f,n}(y)=v_{f}(|x_{n}|y+x_{n}+\xi)$ and $\tilde{h}^{(2)}_{n}(y)=|x_{n}|^{2+\sigma}h_{n}^{(2)}(|x_{n}|y+x_{n}+\xi).$
Noticing that $\|\phi_{n}\|_{*,\xi}=1,$ we have
\begin{equation}\label{fast-lemma-1-phi-1}
|\tilde{\phi}_{n}(y)|\leq \frac{1}{(y+\hat{x}_{n})^{\sigma}},\ \forall y\in\mathbb{R}^{N}\setminus \left\{-\hat{x}_{n}\right\},
\end{equation}
where $\hat{x}_{n}:=\frac{x_{n}}{|x_{n}|}.$
So $\tilde{\phi}_{n}$ is uniformly bounded on compact sets of $\mathbb{R}^{N}\setminus \left\{-\hat{x}_{n}\right\}.$ Similarly, considering that
$$
|\tilde{h}^{(2)}_{n}(y)|\leq \frac{1}{(y+\hat{x}_{n})^{\sigma}}\|h^{(2)}_{n}\|_{**,\xi},\ \forall y\in\mathbb{R}^{N}\setminus \left\{-\hat{x}_{n}\right\},
$$
we obtain $\tilde{h}^{(2)}_{n}\rightarrow 0$ uniformly on compact sets of $\mathbb{R}^{N}\setminus \left\{-\hat{x}_{n}\right\}$ as $n\rightarrow +\infty.$ Thus, by elliptic estimates, we have $\tilde{\phi}_{n}\rightarrow \tilde{\phi}$ uniformly on compact sets of $\mathbb{R}^{N}\setminus \left\{\hat{e}\right\}$ and $\tilde{\phi}$ satisfies
$$
\begin{cases}
\triangle \tilde{\phi}=0, \ x\in\mathbb{R}^{N}\setminus\left\{\hat{e}\right\};\\
|\tilde{\phi}(y)|\leq \frac{1}{|y-\hat{e}|^{\sigma}},\ \forall y\in\mathbb{R}^{N}\setminus \left\{\hat{e}\right\},
\end{cases}
$$
where $\hat{e}=-\underset{n\rightarrow +\infty}{\lim}\frac{x_{n}}{|x_{n}|}.$
By the maximum principle, we conclude that $\tilde{\phi}=0$ which is impossible since $\tilde{\phi}\left(-\hat{e}\right)\neq 0.$

{\bf Step 2. Existence}

We first want to solve (\ref{fast-linear-operator}) on a bounded domain $B_{R}(\xi).$ Let us consider the subspace
$$
X=\left\{\phi \in D_{0}^{1,2}(B_{R}(\xi))\ \mbox{and}\ \int_{B_{R}(\xi)}\psi f'(v_{f})Z_{i}=0,\ i=1,2,\cdots,N \right\}.
$$
Then, according the arguments in \cite{Wei-2016}, finding solution to (\ref{fast-linear-operator}) in this case is equivalent to finding $\phi\in X$ such that
\begin{equation}\label{Fredholm-1}
\int_{B_{R}(\xi)}\nabla \phi \nabla \psi-\int_{B_{R}(\xi)}f'(v_{f})\phi\psi+\int_{B_{R}(\xi)}h \psi=0\ \mbox{for\ all}\ \psi\in X.
\end{equation}
Now, for $h$ satisfying $\|h\|_{**,\xi}<+\infty,$ let us denote by $\phi=A(h)$ the unique solution of the problem
$$
\int_{B_{R}(\xi)}\nabla \phi\nabla \psi +\int_{B_{R}{(\xi)}}h \psi=0\ \mbox{for\ all}\ \psi\in X.
$$
Thus, (\ref{Fredholm-1}) can be written as
$$
\phi-A(f'(v_{f})\phi)=A(h)\ \mbox{for}\ \phi\in X
$$
and, by the compactness of Sobolev's embedding, the map $\phi\rightarrow f'(v_{f})\phi$ is compact.

Hence, we conclude the existence of the solution by the Fredholm alternative since the priori estimate (\ref{fast-linear-norm-up}) implies that the only solution of this equation is $\phi=0$ when $h=0.$ Finally, thanks to the priori estimate again, we can let $R\rightarrow +\infty$ and obtain the existence in the whole space.
\end{proof}

Now we begin to prove Theorem \ref{result-2}. %To this aim, we rescale $v(x)$ as $\lambda^{\frac{2}{p-1}}v(\lambda x+\xi).$ Then the semilinear equation (\ref{WZQ-equation-change}) changes to
%$$
%\triangle v+\lambda^{-\frac{2p}{p-1}}f(\lambda^{\frac{2}{p-1}}v)=V_{\lambda}(x)\lambda^{-\frac{2}{p-1}}\frac{G^{-1}(\lambda^{\frac{2}{p-1}}v)}{g(G^{-1}(\lambda^{\frac{2}{p-1}}v))},\ x\in\mathbb{R}^{N},
%$$
%where $V_{\lambda}(x)=\lambda^{-2}V\left(\frac{x-\xi}{\lambda}\right).$
We look for a solution of the form $v=v_{f}+\phi$ to the equation (\ref{WZQ-equation-change}) and thus acieve the following equation for $\phi$
\begin{equation}\label{fast-solve-linear-operator-1}
\begin{cases}\triangle \phi+f'(v_{f})\phi=E(v_{f})+F(\phi)+M(\phi),\ x\in\mathbb{R}^{N};\\
\underset{|x|\rightarrow +\infty}{\lim}\phi(x)=0,
\end{cases}
\end{equation}
where
$$
E(v_{f})=\varepsilon V(x-\xi) \frac{G^{-1}(v_{f})}{g(G^{-1}(v_{f}))},
$$
$$
F(\phi)=f(v_{f})+f'(v_{f})\phi-f(v_{f}+\phi)
$$
and
$$
M(\phi)=\varepsilon V(x-\xi)\left[ \frac{G^{-1}(v_{f}+\phi)}{g(G^{-1}(v_{f}+\phi))}-\frac{G^{-1}(v_{f})}{g(G^{-1}(v_{f}))}\right].
$$
However, the problem (\ref{fast-solve-linear-operator-1}) may not be solvable under our situation unless $\xi$ can be chosen in a very special way. So instead of solving (\ref{fast-solve-linear-operator-1}), we consider the following projected problem
\begin{equation}\label{fast-solve-linear-operator-2}
\begin{cases}\triangle \phi+f'(v_{f})\phi=E(v_{f})+F(\phi)+M(\phi)+\sum_{i=1}^{N}c_{i}Z_{i},\ x\in\mathbb{R}^{N};\\
\underset{|x|\rightarrow +\infty}{\lim}\phi(x)=0,
\end{cases}
\end{equation}
where $c_{i}$ are constants.

For $\frac{2}{p-1}<\sigma<N-2,$ we first estimate the error $\|E(v_{f})\|_{**,\xi}$ of the approximate solution $v_{f}.$
Considering that
$$
\left |\frac{G^{-1}(v_{f})}{g(G^{-1}(v_{f}))}\right |\leq v_{f}
$$
and
$$
|v_{f}|\leq C(1+|x|)^{2-N}\ \mbox{for\ all}\ x\in\mathbb{R}^{N},
$$
we have
\begin{align*}
\|E(v_{f})\|_{**,\xi}&=\underset{x\in\mathbb{R}^{N}}{\sup}<x-\xi>^{2+\sigma} |E(v_{f})|\\&
\leq \varepsilon \underset{x\in\mathbb{R}^{N}}{\sup}<x-\xi>^{2+\sigma} V(x-\xi) |v_{f}|\\&
\leq C\varepsilon \underset{x\in\mathbb{R}^{N} }{\sup} \left(\frac{<x-\xi>}{1+|x|}\right)^{\sigma}(1+|x|)^{2-N+\sigma}\\&
\leq C\varepsilon.
\end{align*}

In what follows, by applying the Banach fixed point theorem,
% in a suitable $L^{\infty}$ weighted space,
 we can prove that (\ref{fast-solve-linear-operator-2}) is indeed solvable and achieve a solution $(\phi_{\varepsilon},c_{1}, \cdots ,c_{N}).$ We then obtain a solution of the problem (\ref{fast-solve-linear-operator-1}) if $c_{i}=0$ for all $i=1,2,\cdots,N.$

%To solve the problem (\ref{fast-solve-linear-operator-2}), we need the following lemma.
%\begin{lemma}\label{fast-lemma-linear-2}
%Let $|\xi|\leq \Lambda$ and $\frac{N+2}{N-2}<p<\frac{3N+2}{N-2}.$ Suppose $V(x)$ satisfies the condition (\ref{fast-V-%condition}) and $\|h\|_{**,\xi}<+\infty.$ Then, for $\lambda>0$ sufficiently small, the problem
%\begin{equation}\label{fast-solve-linear-operator-3}
%\begin{cases}\triangle \phi+\lambda^{-2}f'(\lambda^{\frac{2}{p-1}}v_{0})\phi-V_{\lambda}(x)\phi=h+\sum_{i=0}^{N}c_{i}Z_{i},\ %x\in\mathbb{R}^{N};\\
%\underset{|x|\rightarrow +\infty}{\lim}\phi(x)=0
%\end{cases}
%\end{equation}
%has a solution $(\phi,c_{0},c_{1},\cdots,c_{N})=\mathcal{T}_{\lambda}(h)$ which depends linearly on $h$ and there is a %constant $C$ independent of $\lambda$ such that
%$$
%\|\phi\|_{*,\xi}+\sum_{i=0}^{N}|c_{i}|\leq C\|h\|_{**,\xi}.
%$$
%\end{lemma}
%Considering that $\lambda^{-2}f'(\lambda^{\frac{2}{p-1}}v_{0})=pv_{0}^{p-1}+o(\lambda^{\frac{2}{p-1}}v_{0})$ as $\lambda%\rightarrow 0$ and $v_{0}$ is a fast decay solution, we can prove Lemma \ref{fast-lemma-linear-2} on a similar method to %\cite{Pino-Wei-2007}.

Based on the description of Lemma \ref{fast-lemma-linear-1}, solving (\ref{fast-solve-linear-operator-2}) reduces now to a fixed point problem. Namely, we need to find a fixed point for the map $$(\phi,c_{1},c_{2},\cdots,c_{N})=\mathcal{A}(\phi,c_{1},c_{2},\cdots,c_{N}):=\mathcal{T}(N_{1}(\phi)+N_{2}(\phi)).$$
Here, we will restrict $\phi$ to be small enough such that the function $v_{f}+\phi$ is always positive
%Moreover, if $\phi$ is a solution of (\ref{fast-solve-linear-operator-2}), similarly as the proof of (\ref{bound-phi-case-1}), we can prove that $|\phi(x)|\leq C$ for all $x\in\mathbb{R}^{N}.$
and we define the set
$$
\Theta =\left\{(\phi,c_{1},c_{2},\cdots,c_{N})\in  \mathbb{R}^{N+1}\  \big |\  \| \phi \|_{*,\xi} +\sum_{i=1}^{N}|c_{1}|\leq  C\varepsilon\right\}.
$$
We now prove that $\mathcal{A}$ has a fixed point in $\Theta.$

For any $(\phi,c_{1},c_{2},\cdots,c_{N} )\in\Theta,$ we first estimate $M(\phi).$
Note that
$$
\left(\frac{G^{-1}(s)}{g(G^{-1}(s))}\right)'=\frac{1}{g^4(G^{-1}(s))}\leq 1\ \mbox{for\ all}\ s\geq 0.
$$
We have
$$
\left |\frac{G^{-1}(v_{f}+\phi)}{g(G^{-1}(v_{f}+\phi))}-\frac{G^{-1}(v_{f})}{g(G^{-1}(v_{f}))} \right |\leq |\phi|
$$
and then
\begin{equation}\label{M-estimate}
\begin{split}
\|M(\phi)\|_{**,\xi}&=\underset{x\in\mathbb{R}^{N}}{\sup}<x-\xi>^{2+\sigma} |M(\phi)|\\&
\leq \varepsilon \underset{x\in\mathbb{R}^{N}}{\sup}<x-\xi>^{2+\sigma} V(x-\xi) |\phi|\\&
\leq C\varepsilon \|\phi\|_{*,\xi}.
\end{split}
\end{equation}

To estimate $F(\phi),$ we need the following fact: if $1<p<2,$ then
\begin{equation}\label{3-p-1-1}
|f'(s_{1})-f'(s_{2})| \leq C |s_{1}-s_{2}|^{p-1}\ \mbox{for\ all}\ s_{1},s_{2}\geq 0\ \mbox{and}\ |s_{1}-s_{2}|\leq 1.
\end{equation}
Indeed, since
$$
f'(s)=(p-1)\frac{G^{-1}(s)^{p-1}}{g^2(G^{-1}(s))}+\frac{G^{-1}(s)^{p-1}}{g^4(G^{-1}(s))},
$$
%and
%\begin{align*}
%f''(s)=p(p-1)\frac{G^{-1}(s)^{p-2}}{g^3(G^{-1}(s))} -(6p+2)\frac{G^{-1}(s)^{p}}{g^5(G^{-1}(s))}+\frac{16G^{-1}(s)^{p+2}}{g^7(G^{-1}(s))},
%\end{align*}
we have
\begin{equation}\label{3-p-1-f-0}
\begin{split}
|f'(s_{1})-f'(s_{2})|&\leq (p-1) \left |\frac{G^{-1}(s_{1})^{p-1}}{g^2(G^{-1}(s_{1}))} -\frac{G^{-1}(s_{2})^{p-1}}{g^2(G^{-1}(s_{2}))}\right | \\&+  \left |\frac{G^{-1}(s_{1})^{p-1}}{g^4(G^{-1}(s_{1}))} -\frac{G^{-1}(s_{2})^{p-1}}{g^4(G^{-1}(s_{2}))}\right |.
\end{split}
\end{equation}
Then, noticing that $|s_{1}-s_{2}|\leq 1,$ we have
\begin{equation}\label{3-p-1-f-1}
\begin{split}
&\left |\frac{G^{-1}(s_{1})^{p-1}}{g^2(G^{-1}(s_{1}))} -\frac{G^{-1}(s_{2})^{p-1}}{g^2(G^{-1}(s_{2}))}\right |\\&=
\frac{\left | (1+2G^{-1}(s_{2})^2)G^{-1}(s_{1})^{p-1}- (1+2G^{-1}(s_{1})^2)G^{-1}(s_{2})^{p-1} \right |}{g^2(G^{-1}(s_{1}))g^2(G^{-1}(s_{2}))} \\&\leq
\frac{|G^{-1}(s_{1})^{p-1}-G^{-1}(s_{2})^{p-1}|}{g^2(G^{-1}(s_{1}))g^2(G^{-1}(s_{2}))}+
2\frac{\left | G^{-1}(s_{2})^2G^{-1}(s_{1})^{p-1}- G^{-1}(s_{1})^2G^{-1}(s_{2})^{p-1} \right |}{g^2(G^{-1}(s_{1}))g^2(G^{-1}(s_{2}))} \\&\leq
C |G^{-1}(s_{1})-G^{-1}(s_{2})|^{p-1}+\frac{2G^{-1}(s_{1})^{p-1}G^{-1}(s_{2})^{p-1}}{g^2(G^{-1}(s_{1}))g^2(G^{-1}(s_{2}))} |G^{-1}(s_{2})^{3-p}-G^{-1}(s_{1})^{3-p}|\\&
\leq C |s_{1}-s_{2}|^{p-1}+ \frac{2(3-p)G^{-1}(s_{12})^{2-p}G^{-1}(s_{1})^{p-1}G^{-1}(s_{2})^{p-1}}{g(G^{-1}(s_{12}))g^2(G^{-1}(s_{1}))g^2(G^{-1}(s_{2}))} |s_{1}-s_{2}|\\&\leq
C |s_{1}-s_{2}|^{p-1}
\end{split}
\end{equation}
since
$$
\frac{2(3-p)G^{-1}(s_{12})^{2-p}G^{-1}(s_{1})^{p-1}G^{-1}(s_{2})^{p-1}}{g(G^{-1}(s_{12}))g^2(G^{-1}(s_{1}))g^2(G^{-1}(s_{2}))}\leq C\ \mbox{for\ all}\ s_{1},s_{2}\geq 0,
$$
where $s_{12}$ belongs to the segment jointing $s_{1}$ and $s_{2}.$ On the other hand, by a similar strategy as the proof of the inequality (\ref{3-p-1-f-1}), we conclude that
$$
  \left |\frac{G^{-1}(s_{1})^{p-1}}{g^4(G^{-1}(s_{1}))} -\frac{G^{-1}(s_{2})^{p-1}}{g^4(G^{-1}(s_{2}))}\right |\leq C |s_{1}-s_{2}|^{p-1}
$$
and thus show the inequality (\ref{3-p-1-1}).

Since $\phi$ is small, based on the fact (\ref{3-p-1-1}), we observe that
\begin{equation}\label{fast-N-2}
\begin{split}
|F(\phi)|&=|f(v_{f})+f'(v_{f})\phi-f(v_{f}+\phi)|\\&
\leq  |f'(v_{1})\phi-f'(v_{f})\phi |\\&
\leq \begin{cases}|f''(v_{2})(v_{1}-v_{f})| |\phi|,\ \mbox{if}\ p\geq 2;\\
C |(v_{1}-v_{f})^{p-1} \phi |,\ \mbox{if}\ 1<p<2
\end{cases}\\&
\leq \begin{cases} C |v_{2}|^{p-2} |\phi|^2,\ \mbox{if}\ p\geq 2 ;\\
C |\phi|^{p} ,\ \mbox{if}\ 1<p<2,
\end{cases}
\end{split}
\end{equation}
where $v_{1}$ lies in the segment jointing $v_{f},$ $v_{f}+\phi$ and $v_{2}=tv_{f}+(1-t)v_{1}$ with $t\in [0,1].$
Thus,
jointly with the fact $|v_{2}|\leq C (1+|x|)^{-\sigma}$ for all $x\in\mathbb{R}^{N},$ we have
\begin{equation}\label{F-estimate}
\begin{split}
\|F(\phi)\|_{**,\xi}&=\underset{x\in\mathbb{R}^{N}}{\sup}<x-\xi>^{2+\sigma} |F(\phi)|\\&
\leq \begin{cases} C\underset{x\in\mathbb{R}^{N}}{\sup}<x-\xi>^{2+\sigma} |v_{2}|^{p-2} |\phi|^2,\ \mbox{if}\ p\geq 2 ;\\
C\underset{x\in\mathbb{R}^{N}}{\sup}<x-\xi>^{2+\sigma} |\phi|^{p} ,\ \mbox{if}\ 1<p<2,
\end{cases}\\&
\leq \begin{cases} C\underset{x\in\mathbb{R}^{N}}{\sup}<x-\xi>^{2-(p-1)\sigma} \left(\frac{<x-\xi>}{1+|x|}\right)^{(p-2)\sigma} \|\phi\|_{*,\xi}^2,\ \mbox{if}\ p\geq 2 ;\\
C\underset{x\in\mathbb{R}^{N}}{\sup}<x-\xi>^{2-(p-1)\sigma} \|\phi\|_{*,\xi}^{p} ,\ \mbox{if}\ 1<p<2,
\end{cases}\\&
\leq C\|\phi\|_{*,\xi}^\gamma,
\end{split}
\end{equation}
where $\gamma=\min\left\{2,p\right\}.$

Therefore, by (\ref{M-estimate}) and (\ref{F-estimate}), jointly with Lemma \ref{fast-lemma-linear-1}, it follows that
\begin{equation}\label{A-estimate-1}
\begin{split}
\|\mathcal{A}(\phi,c_{1},c_{2},\cdots,c_{N} )\|_{*,\xi}\leq &C(\|E(v_{f})\|_{**,\xi}+\|M(\phi)\|_{**,\xi}+\|F(\phi)\|_{**,\xi})\\&
\leq C(\varepsilon+\varepsilon\|\phi\|_{*,\xi}+\|\phi\|_{*,\xi}^\gamma)\\&
\leq C\varepsilon,
\end{split}
\end{equation}
which shows $\mathcal{A}(\Theta)\subset \Theta.$

We still have to prove that $\mathcal{A}$ is a contraction mapping in $\Theta.$ If we take $$(\phi_{1},c_{1,1},c_{2,1},\cdots,c_{N,1} ), (\phi_{2},c_{1,2},c_{2,2},\cdots,c_{N,2} )\in\Theta,$$ then we have
\begin{equation}\label{fast-mapp-1}
\begin{split}
&\|\mathcal{A}(\phi_{1},c_{1,1},c_{2,1},\cdots,c_{N,1})-\mathcal{A}( \phi_{2},c_{1,2},c_{2,2},\cdots,c_{N,2} )\|_{*,\xi}\\&\leq C[\|M(\phi_{1})-M(\phi_{2})\|_{**,\xi}+\|F(\phi_{1})-F(\phi_{2})\|_{**,\xi}].
\end{split}
\end{equation}
To estimate $\|M(\phi_{1})-M(\phi_{2})\|_{**,\xi},$ we note that
\begin{equation}\label{fast-mapp-N-1-1}
\begin{split}
| M(\phi_{1})-M(\phi_{2})|=|D_{\phi}M(\bar{\phi})(\phi_{1}-\phi_{2})|,
\end{split}
\end{equation}
where $\bar{\phi}$ lies in the segment joining $\phi_{1}$ and $\phi_{2}.$
Moreover, a direct calculation shows
\begin{equation}\label{mapp-N-1-2}
\begin{split}
|D_{\phi}M(\phi)|&=\left | \frac{\varepsilon V(x-\xi)}{g^2(G^{-1}(v_{f}+\phi))}\left[1-\frac{2G^{-1}(v_{f}+\phi)^2}{g^2(G^{-1}(v_{f}+\phi))}\right] \right |\\&
= \left | \frac{\varepsilon V(x-\xi)}{g^4(G^{-1}(v_{f}+\phi))}\right |\leq \varepsilon V(x-\xi).
\end{split}
\end{equation}
Then,
\begin{equation}\label{mapp-N-1-3}
\begin{split}
\underset{x\in\mathbb{R}^{N}}{\sup}<x-\xi>^{2+\sigma}|M(\phi_{1})-M(\phi_{2})|& \leq \underset{x\in\mathbb{R}^{N}}{\sup} |x-\xi |^{2} |D_{\phi}N_{1}(\overline{\phi})| \|\phi_{1}-\phi_{2}\|_{*,\xi}\\&
\leq C \varepsilon \|\phi_{1}-\phi_{2}\|_{*,\xi} \underset{x\in\mathbb{R}^{N}}{\sup} <x-\xi>^{2}V(x-\xi)\\&
\leq C\varepsilon \|\phi_{1}-\phi_{2}\|_{*,\xi}.
\end{split}
\end{equation}
Thus, we have
\begin{equation}\label{M-Con}
\|M(\phi_{1})-M(\phi_{2})\|_{**,\xi}\leq C\varepsilon \|\phi_{1}-\phi_{2}\|_{*,\xi}.
\end{equation}

Now we estimate $\|F(\phi_{1})-F(\phi_{2})\|_{**,\xi}.$ We note that
\begin{equation}\label{fast-mapp-N-2-1}
\begin{split}
|F(\phi_{1})-F(\phi_{2})|=|D_{\phi}N_{2}(\bar{\phi})(\phi_{1}-\phi_{2})|,
\end{split}
\end{equation}
where $\bar{\phi}$ lies in the segment joining $\phi_{1}$ and $\phi_{2}.$
Moreover,
\begin{equation}\label{mapp-N-2-3}
\begin{split}
|D_{\phi}N_{2}(\phi)|&=|f'(v_{f})-f'(v_{f}+\phi)|\\&
%\leq \begin{cases}|f''(v_{1})\phi|,\ \mbox{if}\ p\geq 2;\\
% C |\phi|^{p-1},\ \mbox{if}\ 1<p<2
%\end{cases}\\&
\leq \begin{cases} C |v_{1}|^{p-2} |\phi|,\ \mbox{if}\ p\geq 2 ;\\
C |\phi|^{p-1} ,\ \mbox{if}\ 1<p<2,
\end{cases}
\end{split}
\end{equation}
where $v_{1}=tv_{f}+(1-t)(v_{f}+\phi)$ with $t\in [0,1].$
Then, similarly as the proof of (\ref{F-estimate}), we have
\begin{equation}\label{F-Con}
\|F(\phi_{1})-F(\phi_{2})\|_{**,\xi}\leq C \|\bar{\phi}\|^{\min\left\{1,p-1\right\}}_{*,\xi} \|\phi_{1}-\phi_{2}\|_{*,\xi}\leq C\varepsilon^{\min\left\{1,p-1\right\}} \|\phi_{1}-\phi_{2}\|_{*,\xi}.
\end{equation}
Thus, under our situation, combining (\ref{fast-mapp-1}), (\ref{M-Con}) and (\ref{F-Con}), we have
that $\mathcal{A}$ is a contraction mapping in $\Theta$ and hence there indeed exists a fixed point $(\phi_{\varepsilon},c_{1},c_{2},\cdots,c_{N}).$

In what follows of this section, we will complete the proof of Theorem \ref{result-2}.

We have found a solution $(\phi_{\varepsilon},c_{1},c_{2},\cdots,c_{N})$ to (\ref{fast-solve-linear-operator-2}) satisfying
$$
\|\phi_{\varepsilon} \|_{*,\xi}+\sum_{i=1}^{N}|c_{i}| \leq C\varepsilon.
$$
 To prove the result contained in Theorem \ref{result-2}, it suffices to show that the point $\xi$
%  and the point $\xi$
 can be adjust so that the constants $c_{1},c_{2},\cdots,c_{N}$ are all contemporarily equal to zero. Combining Lemma \ref{fast-lemma-linear-1},
 % and the fact that $\xi$ is bounded,
 we only need to show
\begin{equation}\label{fast-Z-1-N-condition}
\int_{\mathbb{R}^{N}}(E(v_{f})+M(\phi_{\varepsilon})+F(\phi_{\varepsilon}))\frac{\partial v_{f}}{\partial x_{j}}=0\ \mbox{for}\ j=1,2,\cdots,N.
\end{equation}
%and
%\begin{equation}\label{fast-Z-0-condition}
%\int_{\mathbb{R}^{N}}(N_{1}(\phi_{\varepsilon})+N_{2}(\phi_{\varepsilon}))z_{0}=0.
%\end{equation}

We first define
\begin{equation}\label{fast-Z-1-function-1}
\begin{split}
F_{j}(\varepsilon,\xi)&=\int_{\mathbb{R}^{N}} (E(v_{f})+M(\phi_{\varepsilon})+F(\phi_{\varepsilon})) \frac{\partial v_{0}}{\partial x_{j}}.
%=\int_{\mathbb{R}^{N}}\Big(\varepsilon V(y-\xi)\frac{G^{-1}(v_{f}+\phi_{\varepsilon})}{g(G^{-1}(v_{f}+\phi_{\varepsilon}))}+N_{2}(\phi_{\varepsilon})\Big)\frac{\partial v_{f}}{\partial y_{j}}.
\end{split}
\end{equation}
The subordinate terms in (\ref{fast-Z-1-function-1}) are $\int_{\mathbb{R}^{N}}F(\phi_{\varepsilon})\frac{\partial v_{f}}{\partial y_{j}} $ and $\int_{\mathbb{R}^{N}}M(\phi_{\varepsilon})\frac{\partial v_{f}}{\partial y_{j}} .$ Indeed,
we have the following estimates
\begin{equation}\label{F-subordinate}
\begin{split}
\left | \int_{\mathbb{R}^{N}}F(\phi_{\varepsilon})\frac{\partial v_{f}}{\partial y_{j}}\right |\leq \|F(\phi_{\varepsilon})\|_{**,\xi}\int_{\mathbb{R}^{N}}<x-\xi>^{-2-\sigma}\left |\frac{\partial v_{f}}{\partial y_{j}}\right |=O(\varepsilon^\gamma)
\end{split}
\end{equation}
and
\begin{equation}\label{M-subordinate}
\begin{split}
\left | \int_{\mathbb{R}^{N}}M(\phi_{\varepsilon})\frac{\partial v_{f}}{\partial y_{j}}\right |\leq \|M(\phi_{\varepsilon})\|_{**,\xi}\int_{\mathbb{R}^{N}}<x-\xi>^{-2-\sigma}\left |\frac{\partial v_{f}}{\partial y_{j}}\right |=O(\varepsilon^2).
\end{split}
\end{equation}

Noticing that there exist $k_{0}\in (0,1)$ and $k_{1}>1$ such that $k_{0}v_{f}\leq \frac{G^{-1}(v_{f})}{g(G^{-1}(v_{f}))}\leq k_{1}v_{f},$ the dominant term in (\ref{fast-Z-1-function-1}) satisfies
\begin{equation}\label{fast-dominant-term}
\begin{split}
\int_{\mathbb{R}^{N}}E(v_{f})\frac{\partial v_{f}}{\partial y_{j}}=\varepsilon\int_{\mathbb{R}^{N}}V(y-\xi)\frac{G^{-1}(v_{f})}{g(G^{-1}(v_{f}))}\frac{\partial v_{f}}{\partial y_{j}}\sim &
\varepsilon\int_{\mathbb{R}^{N}}V(y-\xi)v_{f}\frac{\partial v_{f}}{\partial y_{j}} \\
=&\varepsilon\int_{\mathbb{R}^{N}}V(y-\xi)v_{f}\frac{\partial v_{f}}{\partial y_{j}}\\=&
\frac{\varepsilon}{2}\int_{\mathbb{R}^{N}}v_{f}^{2}\frac{\partial V}{\partial \xi_{j}}(y-\xi)
\\=&\frac{\varepsilon}{2}\frac{\partial}{\partial \xi_{j}}\int_{\mathbb{R}^{N}}v_{f}^{2}V(y-\xi).
\end{split}
\end{equation}
%where the convergence is uniform with respect to $|\xi|\leq \delta_{0}.$
%Thus, we can obtain the existence of a solution $\xi$ to (\ref{fast-Z-1-N-condition}) since $0$ is a critical point of the potential $V.$

Combining (\ref{F-subordinate}), (\ref{M-subordinate}) and (\ref{fast-dominant-term}), we achieve
$$
F_{j}(\varepsilon,\xi)\sim\frac{\varepsilon}{2}\frac{\partial}{\partial \xi_{j}}\int_{\mathbb{R}^{N}}v_{f}^2 V(y-\xi)+o(\varepsilon)\ \mbox{for}\ j=1,2,\cdots,N.
$$
Thus, if we set $G(\xi)=\int_{\mathbb{R}^{N}}v_{f}^2V(y-\xi),$ then $G(0)>0$ and $\underset{|x|\rightarrow +\infty}{\lim}G(\xi)=0.$ This implies that $G$ attains a global maximum point $\xi_{0}\in B_{M}(0)$ for some $M>0.$ By the definition of stable critical point (Musso and Pistoia \cite{Musso-2002}), $G$ has a stable critical point in $B_{M}(0)$ and as a result, we deduce that, for $\varepsilon$ small, $F(\varepsilon,\xi)=(F_{1}(\varepsilon,\xi),F_{2}(\varepsilon,\xi),\cdots, F_{N}(\varepsilon,\xi))$ has a zero point in $B_{M}(0).$ Consequently, $c_{j}=0$ for $j=1,2,\cdots,N.$

\section{Proof of Theorem \ref{result-3}}

In this section, we will construct slow decay solutions to the problem (\ref{WZQ-equation}) with $\varepsilon=1.$
The results of Theorem \ref{result-3} are based on a suitable linear theory devised for the linearized operator associated
to the equation (\ref{WZQ-equation}) at $u=w$ in the entire space $\mathbb{R}^{N}$ and in the application of perturbation arguments. We consider $w$ as an approximation for a solution of (\ref{WZQ-equation}), provided that $\lambda>0$ is chosen small enough. To this aim, we need to know the solvability of the operator $\triangle -V_{\lambda}+pw^{p-1}$ in suitable weighted Sobolev space.
%The structure and the details of the proof of Theorem \ref{result-3} are similar to that of Theorem \ref{result-zero-mass}-(3) except that we need to deal with the term $V(x)u$.

Let
$$Z_{i}=\eta \frac{\partial w}{\partial x_{i}}, i=1,2,\cdots,N,$$
where $\eta\in C_{0}^{\infty}(\mathbb{R}^{N})$ satisfies $0\leq \eta\leq 1.$ Moreover, $\eta(x)=1$ if $|x|\leq R_{0}$ and $\eta(x)=0$ if $|x|\geq R_{0}+1$ for a fixed number $R_{0}>0$ large enough.

Under appropriate norms
\begin{equation}\label{4-norm-1}
\|\phi\|_{*,\xi}=\underset{|x-\xi|\leq 1}{\sup}|x-\xi|^{\sigma}|\phi(x)|+\underset{|x-\xi|\geq 1}{\sup}|x-\xi|^{\frac{2}{p-1}}|\phi(x)|
\end{equation}
and
\begin{equation}\label{4-norm-2}
\|h\|_{**,\xi}=\underset{|x-\xi|\leq 1}{\sup}|x-\xi|^{2+\sigma}|h(x)|+\underset{|x-\xi|\geq 1}{\sup}|x-\xi|^{2+\frac{2}{p-1}}|h(x)|,
\end{equation}
where $\sigma>0$ and $\xi\in\mathbb{R}^{N},$ we first consider the solvability of the linear problem
\begin{equation}\label{4-fast-linear-operator}
\begin{cases}\triangle \phi-V_{\lambda}(x)\phi+pw^{p-1}\phi=h+\sum_{i=1}^{N}c_{i}Z_{i},\ x\in\mathbb{R}^{N};\\
\underset{|x|\rightarrow +\infty}{\lim}\phi(x)=0
\end{cases}
\end{equation}
and thus we need the following lemma which is proved by D\'avila, del Pino, Musso and Wei
in \cite{Pino-Wei-2007}.
%\begin{lemma}\label{4-slow-linear}
%Let $|\xi|\leq \Lambda$ and $0<\sigma<N-2.$
%\begin{itemize}
%\item[{\rm (1).}]
%Asume $N\geq 4,$ $p>\frac{N+1}{N-3}.$ Then there exists a constant $C>0$ such that for any $h$ satisfying $\|h\|_{**,0}<+%\infty,$ equation (\ref{2-fast-linear-operator}) with $c_{i}=0$ for all $1\leq i\leq N$ has a solution $\phi =\mathcal{T}(h)$ such %that $\mathcal{T}$ define a linear map and
%$$
%\|\mathcal{T}(h)\|_{*,0}\leq C\|h\|_{**,0};
%$$
%\item[{\rm (2).}] Assume $N\geq 3,$ $\frac{N+2}{N-2}<p< \frac{N+1}{N-3}.$ Then there is a linear map $(\phi, c_{1},c_{2},%\cdots,c_{N})=\mathcal{T}(h)$ defined whenever $\|h\|_{**,\xi}<\infty$ such that $(\phi,c_{1},c_{2},\cdots,c_{N})$ satisfies %(\ref{2-fast-linear-operator}) and
%\begin{equation}\label{fast-linear-norm-up}
%\|\phi\|_{*,\xi}+\sum_{i=1}^{N}|c_{i}|\leq C\|h\|_{**,\xi}.
%\end{equation}
%Moreover, $c_{i}=0$ for all $1 \leq i\leq N$ if and only if
%\begin{equation}\label{fast-linear-operator-condition}
%\int_{\mathbb{R}}h\frac{\partial w}{\partial x_{i}}=0\ \mbox{for}\ 1\leq i\leq N.
%\end{equation}
%\end{itemize}
%\end{lemma}
%We borrow some definitions defined in Section 2 and consider the solvability of the linear problem
%\begin{equation}\label{4-slow-linear}
%\begin{cases}
%\triangle \phi -V_{\lambda}(x)\phi+pw^{p-1}\phi=h+\sum_{i=1}^{N}c_{i}Z_{i},\ x\in\mathbb{R}^{N};\\
%\underset{|x|\rightarrow\infty}{\lim}\phi(x)=0,
%\end{cases}
%\end{equation}
%where $V_{\lambda}(x)=\lambda^{-2}V\left(\frac{x-\xi}{\lambda}\right).$
%So, we need the following lemma which is proved in \cite{Pino-Wei-2007}.
\begin{lemma}\label{4-slow-linear}
Let $|\xi|\leq \Lambda.$ Suppose $V$ satisfies (\ref{V-condition}) and $\|h\|_{**,\xi}<\infty.$ Then, for $\lambda>0$ sufficiently small,
\begin{itemize}
\item[{\rm (1).}] if $N\geq 4,$ $p>\frac{N+1}{N-3},$ equation (\ref{4-fast-linear-operator}) with $c_{i}=0$ for $1\leq i\leq N$ and $\xi=0$ has a solution $\phi=\mathcal{T}_{\lambda}(h)$ which depends linearly on $h$ and there exist a constant $C$ independent with $\lambda$ such that
$$
\|\mathcal{T}_{\lambda}(h)\|_{*,0}\leq C\|h\|_{**,0};
$$
\item[{\rm (2).}] if $N\geq 3,$ $\frac{N+2}{N-2}<p< \frac{N+1}{N-3}$ and $V$ also satisfies (\ref{slow-V-condition-2}), equation (\ref{4-fast-linear-operator}) has a solution $(\phi,c_{1},c_{2},\cdots,c_{N})=\mathcal{T}_{\lambda}(h)$ which depends linearly on $h$ and there exist a constant $C$ independent with $\lambda$ such that
$$
\|\phi\|_{*,\xi}+\underset{1\leq i\leq N}{\max}|c_{i}|\leq C\|h\|_{**,\xi}.
$$
Moreover, $c_{i}=0$ for all $1\leq i\leq N$ if and only if
\begin{equation}\label{3-slow-or}
\int_{\mathbb{R}^{N}}h\frac{\partial w}{\partial x_{i}}=0\ \mbox{for}\ 1\leq N\leq N.
\end{equation}
\end{itemize}
\end{lemma}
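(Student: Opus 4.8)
The plan is to follow the weighted linear theory of D\'avila, del Pino, Musso and Wei, reducing the statement to two ingredients: an a priori estimate of the form $\|\phi\|_{*,\xi}\leq C(\|h\|_{**,\xi}+\sum_i|c_i|)$ valid uniformly for small $\lambda$, and an existence statement obtained by solving on large balls and passing to the limit. The feature that separates the two cases is the integrability of the pairing $\int_{\mathbb{R}^N}h\,\partial_{x_i}w$. Since $w(r)\sim C_{p,N}r^{-2/(p-1)}$ and hence $\partial_{x_i}w\sim r^{-2/(p-1)-1}$, a direct computation against the weight in (\ref{4-norm-2}) shows that $|x-\xi|^{-2-2/(p-1)}|\partial_{x_i}w|$ is integrable at infinity precisely when $p<\frac{N+1}{N-3}$. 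Thus for $\frac{N+2}{N-2}<p<\frac{N+1}{N-3}$ (part (2)) the numbers $\int h\,\partial_{x_i}w$ are well defined and the corresponding directions must be removed through the Lagrange multipliers $c_i$, whereas for $p>\frac{N+1}{N-3}$ (part (1), which forces $N\geq 4$ since $\frac{N+1}{N-3}=+\infty$ when $N=3$) this pairing diverges, the translation directions become invisible to the $**$-norm, and one solves directly with $c_i=0$.

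For the a priori estimate I would argue by contradiction and blow-up. Assume sequences $\lambda_n\to 0$, $\phi_n$, $h_n$ with $\|\phi_n\|_{*,\xi}=1$, $\|h_n\|_{**,\xi}\to 0$ (and, in part (2), $\int h_n\,\partial_{x_i}w=0$ and $c_{i,n}\to 0$), and select points where the weighted norm is almost attained. Three regimes must be excluded. First, if the near-extremal point stays at bounded distance from $\xi$ but away from it, then on compact sets $V_{\lambda_n}(x)=\lambda_n^{-2}V((x-\xi)/\lambda_n)\to 0$, because $V(y)=o(|y|^{-2})$ forces $\lambda_n^{-2}V((x-\xi)/\lambda_n)=o(|x-\xi|^{-2})$ for fixed $x\neq\xi$; the limit $\phi$ then solves $\triangle\phi+pw^{p-1}\phi=0$ with $\|\phi\|_{*,\xi}<\infty$, and the nondegeneracy of $w$ (radial nondegeneracy together with the mode-by-mode analysis used in Lemma \ref{orthogonality}) forces $\phi\in\mathrm{Span}\{\partial_{x_i}w\}$, which the orthogonality conditions in part (2) then kill. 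Second, if the point approaches $\xi$, I rescale by $\lambda_n$ around $\xi$ to resolve the concentrating potential; here the extra decay (\ref{slow-V-condition-2}), $V(x)\leq C|x|^{-\mu}$ with $\mu>N$, is exactly what guarantees that the rescaled potential yields a solvable limiting problem with no new bounded kernel. Third, if the point escapes to infinity, a rescaling produces a bounded harmonic function on $\mathbb{R}^N$ (or on a punctured space) vanishing at infinity, which the maximum principle forces to be identically zero. In each regime one contradicts that the weighted norm of $\phi_n$ remains of size one.

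Granting the estimate, existence follows by a Fredholm argument mirroring the proof of Lemma \ref{fast-lemma-linear-1}. On each ball $B_R(\xi)$ I set up the variational formulation of (\ref{4-fast-linear-operator}) on the subspace of $D_0^{1,2}(B_R(\xi))$ orthogonal to the $Z_i=\eta\,\partial_{x_i}w$; the maps $\phi\mapsto pw^{p-1}\phi$ and $\phi\mapsto V_{\lambda}\phi$ are compact by Sobolev embedding, the a priori estimate shows the homogeneous problem admits only $\phi=0$, so the Fredholm alternative yields $(\phi_R,c_{i,R})$ with a bound uniform in $R$; letting $R\to+\infty$ and using the uniform weighted estimate to pass to the limit gives a solution on $\mathbb{R}^N$. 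Finally, pairing the equation with $\partial_{x_j}w$, integrating by parts via $\triangle(\partial_{x_j}w)+pw^{p-1}\partial_{x_j}w=0$, one obtains $-\int V_\lambda\phi\,\partial_{x_j}w=\int h\,\partial_{x_j}w+\sum_i c_i\int \eta\,\partial_{x_i}w\,\partial_{x_j}w$; since the coupling matrix $\int\eta\,\partial_{x_i}w\,\partial_{x_j}w$ is nearly diagonal and invertible for $R_0$ large and the potential term is a uniformly controllable lower-order contribution, this linear relation yields the claimed equivalence $c_i=0$ for all $i$ $\iff$ $\int h\,\partial_{x_i}w=0$ for all $i$.

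The main obstacle I expect is the blow-up analysis near the concentration point $\xi$ in part (2): one must rule out that the singularly scaled potential $V_{\lambda_n}$ generates a spurious bounded element in the kernel of the limiting operator, and this is precisely where the quantitative decay $\mu>N$ in (\ref{slow-V-condition-2}) is used. The borderline integrability at $p=\frac{N+1}{N-3}$ is the other delicate point, since it governs whether the translation modes must be projected out at all; tracking it uniformly in $\lambda$, together with the interaction between the slow tail $r^{-2/(p-1)}$ of $\phi$ and the concentrated potential, is the technical heart of the argument.
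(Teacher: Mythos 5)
The paper does not prove this lemma at all: it is quoted verbatim from D\'avila--del Pino--Musso--Wei \cite{Pino-Wei-2007} (``we need the following lemma which is proved by \dots in \cite{Pino-Wei-2007}''), so there is no in-paper argument to compare against. Your proposal is a faithful reconstruction of the linear theory in that reference --- the a priori estimate by contradiction and blow-up in the three regimes, the Fredholm-alternative existence on expanding balls, and the identification of $p=\frac{N+1}{N-3}$ as the threshold at which $\int_{\mathbb{R}^N}|x-\xi|^{-2-\frac{2}{p-1}}\left|\frac{\partial w}{\partial x_i}\right|$ ceases to converge, so that the translation modes need (or need not) be projected out --- and in that sense it takes essentially the same route as the cited proof.

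One step in your sketch is genuinely looser than the statement requires: the ``if and only if'' characterization of $c_i=0$. Pairing the equation with $\frac{\partial w}{\partial x_j}$ gives, as you write,
\begin{equation*}
-\int_{\mathbb{R}^N}V_\lambda\,\phi\,\frac{\partial w}{\partial x_j}
=\int_{\mathbb{R}^N}h\,\frac{\partial w}{\partial x_j}
+\sum_{i=1}^{N}c_i\int_{\mathbb{R}^N}\eta\,\frac{\partial w}{\partial x_i}\frac{\partial w}{\partial x_j},
\end{equation*}
and the left-hand side does not vanish; calling it ``a uniformly controllable lower-order contribution'' shows the $c_i$ are \emph{small} when $\int h\,\partial_{x_j}w=0$, but smallness is not the exact equivalence claimed in (\ref{3-slow-or}). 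To close this you must exploit linearity: both $h\mapsto (c_i)$ and $h\mapsto \bigl(\int h\,\partial_{x_i}w\bigr)$ are linear, the potential term is $O(\lambda^{\tau})$ times the data in the relevant norm (this is where (\ref{V-condition}), and in case (2) the decay $\mu>N$, enter), and a Neumann-series/perturbation argument on the resulting $N\times N$ linear relation identifies the two kernels for $\lambda$ small --- or, alternatively, one absorbs $V_\lambda\phi$ into the right-hand side and notes that only the projection of the \emph{total} right-hand side matters, which is in fact how the paper uses the lemma in (\ref{or-condition-1}). You should also verify that the integration by parts $\int(\triangle\phi+pw^{p-1}\phi)\partial_{x_j}w=0$ is legitimate, i.e.\ that the boundary terms vanish given only the slow decay $|\phi|\lesssim |x-\xi|^{-2/(p-1)}$; this is exactly the borderline-integrability issue you flag at the end, and it is where the restriction $p<\frac{N+1}{N-3}$ in part (2) is used a second time.
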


Based on Lemma \ref{4-slow-linear}, we can prove Theorem \ref{result-3}. We look for a solution of the form $v=w+\phi$ to the equation (\ref{WZQ-equation-change-lambda}) and, for $S(w),$ $N(\phi)$ defined in Section 2 and $h(s):=\frac{G^{-1}(s)}{g(G^{-1}(s))}$, we achieve the following equation
\begin{equation}\label{2-phi-solution}
\begin{cases}
\triangle \phi-V_{\lambda}(x)\phi+pw^{p-1}\phi=S_{1}(w)+N(\phi)+P(\phi),\ x\in \mathbb{R}^{N};\\
\underset{|x|\rightarrow +\infty}{\lim}\phi(x)=0,
\end{cases}
\end{equation}
where
$$
S_{1}(w)=S(w)+V_{\lambda}(x)\lambda^{-\frac{2}{p-1} } h(\lambda^{\frac{2}{p-1}}w)
$$
and
$$
P(\phi)=V_{\lambda}(x) [\lambda^{-\frac{2}{p-1} } (h(\lambda^{\frac{2}{p-1}}(w+\phi))-h(\lambda^{\frac{2}{p-1}}w))-\phi].
$$

{\bf The case $p>\frac{N+1}{N-3}$}

In this case, we rescale $v(x)$ as $\lambda^{\frac{2}{p-1}}v(\lambda x),$ that is, $\xi=0$ in the previous paragraph.
Computations show that
\begin{align*}
\lambda^{-\frac{2}{p-1} } h(\lambda^{\frac{2}{p-1}}w) =w+o(1)\ \mbox{as}\ \lambda\rightarrow 0.
\end{align*}
According to the arguments in \cite{Pino-Wei-2007}, we know
$$
\|V_{\lambda}(x) \lambda^{-\frac{2}{p-1} } h(\lambda^{\frac{2}{p-1}}w) \|_{**,0}:=\delta(\lambda)=o(1)\ \mbox{as}\ \lambda\rightarrow 0.
$$
Thus, for $\sigma\in \left(0,\min\left\{2,\frac{2}{p-1}\right\}\right),$ the error of the approximate solution in the norm (\ref{4-norm-2}) is
\begin{equation}\label{4-error}
\|S_{1}(w)\|_{**,0}=\|S(w)+V_{\lambda}(x) \lambda^{-\frac{2}{p-1} } h(\lambda^{\frac{2}{p-1}}w)\|_{**,0}\leq C\rho(\lambda),
\end{equation}
where $\rho(\lambda):=\lambda^{\frac{4}{p-1}}+\delta(\lambda).$
Consequently, for the operator $\mathcal{A}_{\lambda}(\phi):=\mathcal{T}_{\lambda}(S_{1}(w)+N(\phi)+P(\phi)),$ where $\mathcal{T}_{\lambda}$ is given in Lemma \ref{4-slow-linear}-(1), we can use the contraction mapping theorem on
$$
\Sigma_{\lambda}=\left\{\phi: \mathbb{R}^{N}\rightarrow \mathbb{R} \big |\  \|\phi\|_{*,0}\leq C\rho(\lambda) \right\}
$$
and we will prove that $\mathcal{A}_{\lambda}$ has a fixed point in $\Sigma_{\lambda}.$

For any $\phi\in \Sigma_{\lambda},$ we first give the estimate of $\|P(\phi)\|_{**,0}.$ We observe that, for a number $C_{3}>0,$
\begin{equation}\label{add-P-1}
\begin{split}
\lambda^{-\frac{2}{p-1} } (h(\lambda^{\frac{2}{p-1}}(w+\phi))-h(\lambda^{\frac{2}{p-1}}w))-\phi
&=
-C_{3}\lambda^{\frac{4}{p-1}}[(w+\phi)^{3}-w^3]\\&+o\left(\lambda^{\frac{4}{p-1}}\right)\ \mbox{as}\ \lambda\rightarrow 0.
\end{split}
\end{equation}
Thus, we have
\begin{equation}\label{P-estimate-1}
\begin{split}
\underset{|x|\leq 1}{\sup}|x|^{2+\sigma}|P(\phi)|
\leq &C\underset{|x|\leq 1}{\sup}|x|^{2+\sigma}V_{\lambda}(x)\lambda^{\frac{4}{p-1}}(|w^{2}\phi|+|\phi|^{3})\\
\leq & C\lambda^{\frac{4}{p-1}}\underset{|x|\leq 1}{\sup}|x|^{\sigma}(|w^2\phi|+|\phi|^{3})\\
\leq & C\lambda^{\frac{4}{p-1}}\|\phi\|_{*,0}.
\end{split}
\end{equation}
%But
%\begin{equation}\label{P-estimate-3}
%\begin{split}
%\lambda^{-2+\frac{4}{p-1}}\underset{|x|\leq \lambda}{\sup}|x|^{2+\sigma}V\left(\frac{x}{\lambda}\right)(|w^2\phi|+|\phi|^{3})&\leq C\lambda^{\frac{4}{p-1}}\|\phi\|_{*,0}
%\end{split}
%\end{equation}
%and
%\begin{equation}\label{P-estimate-4}
%\begin{split}
%\lambda^{-2+\frac{4}{p-1}}\underset{\lambda\leq |x|\leq 1}{\sup}|x|^{2+\sigma}V\left(\frac{x}{\lambda}\right)(|w^2\phi|+|\phi|^{3})&\leq \lambda^{\frac{4}{p-1}}\underset{\lambda\leq |x|\leq 1}{\sup}|x|^{\sigma}(|w^2\phi|+|\phi|^{3})\\&\leq
%C\lambda^{\frac{4}{p-1}}\|\phi\|_{*,0}.
%\end{split}
%\end{equation}
On the other hand, combining
\begin{equation}\label{add-phi}
|\phi(x)|\leq C|x|^{-\frac{2}{p-1}}\|\phi\|_{*,0}\  \mbox{for}\  |x|\geq 1
\end{equation}
and
\begin{equation}\label{add-w}
w(x)\leq C(1+|x|)^{-\frac{2}{p-1}}\  \mbox{for}\  x\in\mathbb{R}^{N},
\end{equation}
we have
\begin{equation}\label{P-estimate-5}
\begin{split}
\underset{|x|\geq 1}{\sup}|x|^{2+\frac{2}{p-1}}|P(\phi)|
\leq &C\lambda^{\frac{4}{p-1}}\underset{|x|\geq 1}{\sup}|x|^{2+\frac{2}{p-1}}V_{\lambda}(x)( |w^2\phi|+|\phi|^3)\\
\leq & C\lambda^{\frac{4}{p-1}}\underset{|x|\geq 1}{\sup}\left((1+|x|)^{-\frac{4}{p-1}}\|\phi\|_{*,0}+|x|^{-\frac{4}{p-1}}\|\phi\|_{*,0}^3\right)\\
\leq & C\lambda^{\frac{4}{p-1}}(\|\phi\|_{*,0}+\|\phi\|_{*,0}^{3}).
\end{split}
\end{equation}
Consequently, combining (\ref{P-estimate-1}) and (\ref{P-estimate-5}), we have
\begin{equation}\label{P-estimate-6}
\begin{split}
\|P(\phi)\|_{**,0}&\leq C\lambda^{\frac{4}{p-1}}(\|\phi\|_{*,0}+\|\phi\|_{*,0}^{3}).
\end{split}
\end{equation}

Thus, jointly with the estimate of $\|N(\phi)\|_{**,0}$ in Section 2, we conclude
\begin{align*}
\|\mathcal{A}_{\lambda}(\phi)\|_{*,0}& \leq C \|S_{1}(w)+N(\phi)+P(\phi)\|_{**,0}\\&
\leq C[\rho(\lambda)+\rho(\lambda)^2+\rho(\lambda)^p+\rho(\lambda)+\rho(\lambda)^3]\\&
\leq C\rho(\lambda) \ \mbox{for\ any}\ \phi\in \Sigma_{\lambda}.
\end{align*}
That is, $\mathcal{A}_{\lambda}(\Sigma_{\lambda})\subset \Sigma_{\lambda}.$

For any $\phi_{1},\phi_{2}\in \Sigma_{\lambda},$ we want to estimate $\|P(\phi_{1})-P(\phi_{2})\|_{**,0}.$ We note that
\begin{equation}\label{mapp-P-1}
\begin{split}
|P(\phi_{1})-P(\phi_{2})|=|D_{\phi}P(\bar{\phi})(\phi_{1}-\phi_{2})|,
\end{split}
\end{equation}
where $\bar{\phi}$ lies in the segment joining $\phi_{1}$ and $\phi_{2}.$ Then, it follows that
$$
|x|^{2+\sigma}|P(\phi_{1})-P(\phi_{2})|\leq |x|^{2} |D_{\phi}P(\bar{\phi})| \|\phi_{1}-\phi_{2}\|_{*,0}\ \mbox{for}\ |x|\leq 1
$$
and
$$
|x|^{2+\frac{2}{p-1}}|P(\phi_{1})-P(\phi_{2})|\leq |x|^{2} |D_{\phi}P(\bar{\phi})| \|\phi_{1}-\phi_{2}\|_{*,0}\ \mbox{for}\ |x|\geq 1.
$$
Thus, we have
\begin{equation}\label{mapp-P-2}
\|P(\phi_{1})-P(\phi_{2})\|_{**,0}\leq C\underset{x\in\mathbb{R}^{N}}{\sup}\left(|x|^{2}|D_{\phi}P(\bar{\phi})|\right)\|\phi_{1}-\phi_{2}\|_{*,0}.
\end{equation}
Moreover, a direct calculation shows
\begin{equation}\label{mapp-P-3}
\begin{split}
D_{\phi}P(\phi)&=V_{\lambda}(x)\left(\frac{1}{g^2(G^{-1}(\lambda^{\frac{2}{p-1}}(w+\phi)))}-\frac{2(G^{-1}(\lambda^{\frac{2}{p-1}}(w+\phi)))^2}{g^4(G^{-1}(\lambda^{\frac{2}{p-1}}(w+\phi)))}-1\right)\\&
=V_{\lambda}(x)\left[1-4\lambda^{\frac{4}{p-1}}(w+\phi)^2+o(\lambda^{\frac{4}{p-1}}(w+\phi)^2)-1\right]\ \mbox{as}\ \lambda\rightarrow 0.
\end{split}
\end{equation}
To go a step further, based on the arguments in the previous paragraph, we conclude that
  \begin{equation}\label{mapp-P-4}
\begin{split}
\underset{x\in\mathbb{R}^{N}}{\sup}\left(|x|^{2}|D_{\phi}P(\bar{\phi})|\right)&\leq C\lambda^{\frac{4}{p-1}}.
\end{split}
\end{equation}
Consequently, combining (\ref{mapp-P-2}) and (\ref{mapp-P-4}), it follows that
\begin{equation}\label{mapp-P-5}
\|P(\phi_{1})-P(\phi_{2})\|_{**,0}\leq \frac{1}{4}\|\phi_{1}-\phi_{2}\|_{*,0}
\end{equation}
for $\lambda$ sufficiently small.

It is straightforward to show that
\begin{equation}\label{4-Con}
\begin{split}
\|\mathcal{A}_{\lambda}(\phi_{1})-\mathcal{A}_{\lambda}(\phi_{2})\|_{*,0} &\leq C[\|N(\phi_{1})-N(\phi_{2})\|_{**,0}+\|P(\phi_{1})-P(\phi_{2})\|_{**,0}]\\&
\leq \frac{1}{2}\|\phi_{1}-\phi_{2}\|_{*,0}\ \mbox{for}\ \phi_{1},\phi_{2}\in \Sigma_{\lambda}
\end{split}
\end{equation}
since we can achieve that
$$
\|N(\phi_{1})-N(\phi_{2})\|_{**,0}\leq \frac{1}{4}\|\phi_{1}-\phi_{2}\|_{*,0}
$$
according to Section 2 for $\lambda$ sufficiently small. This means that $\mathcal{A}_{\lambda}$ is a contraction mapping from $\Sigma_{\lambda}$ into itself and hence a fixed point $\phi_{\lambda}$ indeed exists. So the function $v_{\lambda}(x):=\lambda^{\frac{2}{p-1}}(w(\lambda x)+\phi_{\lambda}(\lambda x))$ is a continuum solutions of (\ref{WZQ-equation-change-lambda}) satisfying $\underset{\lambda\rightarrow 0}{\lim} v_{\lambda}(x)=0$ uniformly in $\mathbb{R}^{N}$ and $u_{\lambda}(x)=G^{-1}(v_{\lambda}(x))$ is our desired solution to (\ref{WZQ-equation}).

{\bf The case $\frac{N+2}{N-2}<p<\frac{N+1}{N-3}$}

In this case, the problem (\ref{2-phi-solution}) may not be solvable under our situation unless $\xi$ is chosen in a very special way. So, instead of solving (\ref{2-phi-solution}), we consider the following projected problem
\begin{equation}\label{2-phi-solution-or}
\begin{cases}
\triangle \phi-V_{\lambda}(x)\phi+pw^{p-1}\phi=S_{1}(w)+N(\phi)+P(\phi)+\sum_{i=1}^{N}c_{i}Z_{i},\ x\in \mathbb{R}^{N};\\
\underset{|x|\rightarrow +\infty}{\lim}\phi(x)=0,
\end{cases}
\end{equation}
where $c_{i}$ are constants.
Moreover, we will slightly change the previous definition of the norms as
$$
\|\phi\|_{*,\xi}^{(\theta)} =\underset{|x-\xi|\leq 1}{\sup}|x-\xi|^{\theta}|\phi(x)|+\underset{|x-\xi|\geq 1}{\sup}|x-\xi|^{\frac{2}{p-1}}|\phi(x)|
$$
and
$$
\|h\|_{**,\xi}^{(\theta)} =\underset{|x-\xi|\leq 1}{\sup}|x-\xi|^{2+\theta}|h(x)|+\underset{|x-\xi|\geq 1}{\sup}|x-\xi|^{2+\frac{2}{p-1}}|h(x)|.
$$
Just as the case $p>\frac{N+1}{N-3},$ we can prove that (\ref{2-phi-solution-or}) is indeed solvable and achieve a solution $(\phi(\lambda ,\xi),c_{1}(\lambda,\xi),c_{2}(\lambda,\xi)\cdots ,c_{N}(\lambda,\xi)).$ We then obtain a solution of the problem (\ref{2-phi-solution}) if $c_{i}(\lambda,\xi)=0$ for all $i=1,2,\cdots,N.$

Here, we also fix $\sigma\in \left(0,\min\left\{2,\frac{2}{p-1}\right\}\right)$ and find the error of the approximate solution is
\begin{equation}\label{4-error-case-2}
\|S_{1}(w)\|_{**,\xi}^{(\sigma)}\leq C\rho(\lambda),
\end{equation}
where $\rho(\lambda)=o(1)$ as $\lambda\rightarrow 0.$
So we can define
$$
\Sigma_{\lambda,\sigma}=\left\{ (\phi, c_{1},c_{2},\cdots,c_{N}) \in\mathbb{R}^{N+1}\  \big  | \ \|\phi\|_{*,\xi}^{(\sigma)}+\sum_{i=1}^{N}c_{i}\leq \rho(\lambda)\right\}.
$$
Similarly, as the proof of the previous case, jointly with Lemma  \ref{4-slow-linear}-(2),
we conclude that the operator $(\phi, c_{1},c_{2},\cdots,c_{N})=\mathcal{A}_{\lambda}(\phi, c_{1},c_{2},\cdots,c_{N}):=\mathcal{T}_{\lambda}(S_{1}(w)+N(\phi)+P(\phi))$ is a contraction mapping in $\Sigma_{\lambda,\sigma}$ and hence achieve a fixed point
$$(\phi(\lambda ,\xi),c_{1}(\lambda,\xi),c_{2}(\lambda,\xi)\cdots ,c_{N}(\lambda,\xi))\in \Sigma_{\lambda,\sigma},$$ which satisfies the equation (\ref{2-phi-solution-or}).
Moreover, under the condition (\ref{slow-V-condition-2}), we observe that $\rho(\lambda)$ can be taken as $\lambda^{\theta}$ in (\ref{4-error-case-2}) for any $\theta\in \left(0,\frac{4}{p-1} \right).$
That is,
\begin{equation}\label{4-phi-bound}
\begin{split}
\| \lambda^{-\frac{2}{p-1}}V_{\lambda}h(\lambda^{\frac{2}{p-1}}w)\|_{**,\xi}^{(\theta)}\leq C\lambda^{\theta}\ \mbox{for}\ \theta\in \left(0,N-2 \right),
\end{split}
\end{equation}
\begin{equation}\label{4-phi-bound-1}
\begin{split}
\| S(w)\|_{**,\xi}^{(\theta)}\leq C\lambda^{\theta}\ \mbox{for}\ \theta\in \left(0,\frac{4}{p-1} \right)
\end{split}
\end{equation}
and
\begin{equation}\label{phi-bound-3-3}
\begin{split}
\|\phi(\lambda,\xi)\|_{*,\xi}^{(\theta)}+\underset{1\leq i\leq N}{\max}|c_{i}(\lambda)|\leq C\lambda^{\theta}\ \mbox{for}\ \theta\in \left(0,\frac{4}{p-1} \right).
\end{split}
\end{equation}

Thus, to complete our proof, by Lemma \ref{4-slow-linear}-(2) we need to find $\xi=\xi_{\lambda}$ such that
\begin{equation}\label{or-condition-1}
\int_{\mathbb{R}^{N}}\left(S(w)+V_{\lambda}\lambda^{-\frac{2}{p-1}}h(\lambda^{\frac{2}{p-1}}w)+N(\phi(\lambda,\xi))+P(\phi(\lambda,\xi))\right)\frac{\partial w}{\partial x_{j}}=0,\ 1\leq j\leq N.
\end{equation}
Combining the arguments in \cite{Pino-Wei-2007} and noticing that $\frac{4}{p-1}<N-2$, we know
\begin{equation}\label{int-V-estimate}
\int_{\mathbb{R}^{N}}\lambda^{-\frac{2}{p-1}}V_{\lambda}h(\lambda^{\frac{2}{p-1}}w) \frac{\partial w}{\partial x_{j}}=o(\lambda^{\frac{4}{p-1}})\ \mbox{as}\ \lambda\rightarrow 0
\end{equation}
and
\begin{equation}\label{int-N-estimate}
\int_{\mathbb{R}^{N}}N(\phi(\lambda,\xi)) \frac{\partial w}{\partial x_{j}}=o(\lambda^{\frac{4}{p-1}})\ \mbox{as}\ \lambda\rightarrow 0.
\end{equation}
Moreover, noticing that
$$
|P(\phi(\lambda,\xi))|\leq C\lambda^{\frac{4}{p-1}}V_{\lambda}(|w^2\phi(\lambda,\xi)|+|\phi(\lambda,\xi)|^3),
$$
we have
\begin{equation}\label{int-P-estimate}
\begin{split}
\left | \int_{\mathbb{R}^{N}}P(\phi(\lambda,\xi))\frac{\partial w}{\partial x_{j}}\right |& \leq C\lambda^{\frac{4}{p-1}} \left |\int_{\mathbb{R}^{N}}V_{\lambda}\phi(\lambda,\xi) (w^2+\phi^2(\lambda,\xi)) \frac{\partial w}{\partial x_{j}}\right |\\&
\leq C\lambda^{\frac{4}{p-1}} \left |\int_{\mathbb{R}^{N}}V_{\lambda}\phi(\lambda,\xi) \frac{\partial w}{\partial x_{j}}\right |\\&
=o(\lambda^{\frac{4}{p-1}})\ \mbox{as}\ \lambda\rightarrow 0.
\end{split}
\end{equation}
Now, we claim that the dominant term in (\ref{or-condition-1}) is
\begin{equation}\label{dominant-term}
\int_{\mathbb{R}^{N}}S(\phi(\lambda,\xi)) \frac{\partial w}{\partial x_{j}}.
\end{equation}
Note that
$$
S(w)=C_{p}\lambda^{\frac{4}{p-1}}w^{p+2}+o(\lambda^{\frac{4}{p-1}}w^{p+2})\ \mbox{as}\ \lambda\rightarrow 0.
$$
We have
\begin{equation}\label{int-S-estimate}
\begin{split}
\int_{\mathbb{R}^{N}}S(\phi(\lambda,\xi)) \frac{\partial w}{\partial x_{j}}=C_{p}\lambda^{\frac{4}{p-1}}\int_{\mathbb{R}^{N}}
w^{p+2}(x+\xi)\frac{\partial w}{\partial x_{j}}(x+\xi)\mbox{d}x+o(\lambda^{\frac{4}{p-1}})\ \mbox{as}\ \lambda\rightarrow 0.
\end{split}
\end{equation}
If we define
$$
F_{\lambda}^{(j)}(\xi)=\int_{\mathbb{R}^{N}}\left(S(w)+V_{\lambda}\lambda^{-\frac{2}{p-1}}h(\lambda^{\frac{2}{p-1}}w)+N(\phi(\lambda,\xi))+P(\phi(\lambda,\xi))\right)\frac{\partial w}{\partial x_{j}}
$$
and $F_{\lambda}(\xi)=(F_{\lambda}^{(1)}(\xi),F_{\lambda}^{(2)}(\xi),\cdots,F_{\lambda}^{(N)}(\xi)).$ Then, by (\ref{int-V-estimate}), (\ref{int-N-estimate}), (\ref{int-P-estimate}) and (\ref{int-S-estimate}), we achieve that
$$
F_{\lambda}^{(j)}(\xi)=C_{p}\lambda^{\frac{4}{p-1}}\int_{\mathbb{R}^{N}}
w^{p+2}(x+\xi)\frac{\partial w}{\partial x_{j}}(x+\xi)\mbox{d}x+o(\lambda^{\frac{4}{p-1}}) \ \mbox{as}\ \lambda\rightarrow 0
$$
and so we can show the existence of a solution $\xi_{\lambda}$ to (\ref{or-condition-1}) since $0$ is a critical point of $w.$
Thus, we conclude that
$$
\langle F_{\lambda}(\xi), \xi\rangle<0\ \mbox{for}\ |\xi|=\delta,
$$
where $\delta$ is a fixed small constant. Using this fact and degree theory we obtain the existence of $\xi_{\lambda}$ such that $F_{\lambda}(\xi_{\lambda})=0$ in $B_{\delta}.$ This complete the proof of Theorem \ref{result-3}.

\newpage
\end{document}